\documentclass[12pt]{article}
\usepackage{amsfonts}
\usepackage{amsmath,amssymb,amsthm}
\usepackage[usenames]{color}
\usepackage{mathrsfs}
\usepackage{syntonly}
\usepackage[top=1in,bottom=1.4in,left=1in,right=1in]{geometry}
\usepackage[all,cmtip]{xy}
\usepackage{bbm}
\usepackage{mathrsfs}
\usepackage{dsfont}
\usepackage{enumitem}
\usepackage{fouridx}
\usepackage[stable]{footmisc}
\usepackage{array}
\usepackage{caption}
\usepackage{latexsym}
\usepackage{extarrows}
\usepackage{amsfonts}
\usepackage{amsmath}
\usepackage{amsthm}
\usepackage{amssymb}
\usepackage{latexsym}
\usepackage{indentfirst}
\allowdisplaybreaks

\newcommand{\C}{\mathbb{C}}
\newcommand{\Z}{\mathbb{Z}}
\newcommand{\N}{\mathbb{N}}

\newcommand{\g}{\mathfrak{g}}
\newcommand{\W}{\mathcal{W}}

\newcommand{\yg}{Y(\mathfrak{g})}
\newcommand{\ysl}{Y(\mathfrak{sl}_2)}
\newcommand{\nysl}{\mathfrak{sl}_2}
\newcommand{\nyn}{\mathfrak{sl}_{l+1}}
\newcommand{\nyo}{\mathfrak{so}(2l,\C)}

\newcommand{\nysp}{\mathfrak{sp}(2l,\C)}

\newcommand{\nyso}{\mathfrak{so}(2l+1,\C)}
\newcommand{\lap}{\lambda_{\pi}}

\numberwithin{equation}{section}
\newtheorem{theorem}{Theorem}[section] 
\newtheorem{proposition}[theorem]{Proposition}
\newtheorem{corollary}[theorem]{Corollary}
\newtheorem{lemma}[theorem]{Lemma}
\newtheorem{definition}[theorem]{Definition}
\newtheorem{remark}[theorem]{Remark}

\textheight 216mm   
\textwidth 152mm    

\oddsidemargin 3.6mm    
\topmargin -10mm    
\newcommand{\DynkinEs}
{
\xy
(0.5,0)*{};(7.5,0)*{} **\dir{-};
(8.5,0)*{};(15.5,0)*{} **\dir{-};
(16.5,0)*{};(23.5,0)*{} **\dir{-};
(24.5,0)*{};(31.5,0)*{} **\dir{-};
(16,0.5)*{};(16,5.5)*{} **\dir{-};
(0,0)*{\circ}; (8,0)*{\circ}; (16,0)*{\circ};
(24,0)*{\circ}; (32,0)*{\circ};
(16,6)*{\circ};
(0,-3)*{1}; (8,-3)*{3}; (16,-3)*{4};
(24,-3)*{5}; (32,-3)*{6};
(14,5)*{2};
\endxy
}

\newcommand{\DynkinEsv}
{
\xy
(0.5,0)*{};(7.5,0)*{} **\dir{-};
(8.5,0)*{};(15.5,0)*{} **\dir{-};
(16.5,0)*{};(23.5,0)*{} **\dir{-};
(24.5,0)*{};(31.5,0)*{} **\dir{-};
(32.5,0)*{};(39.5,0)*{} **\dir{-};
(16,0.5)*{};(16,5.5)*{} **\dir{-};
(0,0)*{\circ}; (8,0)*{\circ}; (16,0)*{\circ};
(24,0)*{\circ}; (32,0)*{\circ};
(40,0)*{\circ};
(16,6)*{\circ};
(0,-3)*{1}; (8,-3)*{3}; (16,-3)*{4};
(24,-3)*{5}; (32,-3)*{6}; (40,-3)*{7};
(14,5)*{2};
\endxy
}

\newcommand{\DynkinEe}
{
\xy
(0.5,0)*{};(7.5,0)*{} **\dir{-};
(8.5,0)*{};(15.5,0)*{} **\dir{-};
(16.5,0)*{};(23.5,0)*{} **\dir{-};
(24.5,0)*{};(31.5,0)*{} **\dir{-};
(32.5,0)*{};(39.5,0)*{} **\dir{-};
(40.5,0)*{};(47.5,0)*{} **\dir{-};
(16,0.5)*{};(16,5.5)*{} **\dir{-};
(0,0)*{\circ}; (8,0)*{\circ}; (16,0)*{\circ};
(24,0)*{\circ}; (32,0)*{\circ};
(40,0)*{\circ}; (48,0)*{\circ};
(16,6)*{\circ};
(0,-3)*{1}; (8,-3)*{3}; (16,-3)*{4};
(24,-3)*{5}; (32,-3)*{6}; (40,-3)*{7};
(48,-3)*{8}; (14,5)*{2};
\endxy
}

\newcommand{\DynkinF}
{
\xy
(0.5,0)*{};(7.5,0)*{} **\dir{-};
{\ar@{=>} (8.5,0)*{}; (15.5,0)*{}};
(16.5,0)*{};(23.5,0)*{} **\dir{-};
(0,0)*{\circ}; (8,0)*{\circ};
(16,0)*{\circ}; (24,0)*{\circ};
(0,-3)*{1}; (8,-3)*{2};
(16,-3)*{3};(24,-3)*{4};
\endxy
}

\newcommand{\DynkinG}
{
\xy
(0.5,0.5)*{};(7,0.5)*{} **\dir{-};
(0.5,0)*{};(7.5,0)*{} **\dir{-};
(0.5,-0.5)*{};(7,-0.5)*{} **\dir{-};
(6.5,1.5)*{};(7.5,0)*{} **\dir{-};
(6.5,-1.5)*{};(7.5,0)*{} **\dir{-};
(0,0)*{\circ}; (8,0)*{\circ};
(0,-3)*{1}; (8,-3)*{2};
\endxy
}
\begin{document}


\title{\Large\bf  Braid group actions and tensor products for Yangians}
\author{{Yilan Tan}}

\date{} 

\maketitle
\begin{abstract}
We introduce a braid group action on $l$-tuple of rational functions for the finite-dimensional representations of Yangians $\yg$, where $\g$ is a complex simple Lie algebra. It provides an efficient way to compute certain polynomials which allows us to provide a finite set of numbers at which the tensor product of Kirillov-Reshetikhin modules of Yangians may fail to be cyclic.
\end{abstract}

\vspace{10 mm}

{\it Key words: Yangians; braid group action; tensor product; cyclic condition.}

\vspace{39 mm}






\newpage

\section{Introduction}
Let $\g$ be a complex simple Lie algebra of rank $l$. For each $\g$, there exists a Hopf algebra $\yg$, called the Yangian of $\g$, which is a deformation of the universal enveloping algebra of the current algebra $\g[t]$. 
For each $l$-tuple of polynomials $\pi=\big(\pi_1(u),\ldots, \pi_{l}(u)\big)$, Drinfeld showed in \cite{Dr2} that there exists a unique finite-dimensional
irreducible representation $V(\pi)$ of the Yangian $\yg$. If there exists an $i\in I=\{1,\ldots, l\}$ such that
$\pi_i(u)=\big(u-a\big)\big(u-(a+1)\big)\ldots \big(u-(a+m-1)\big)$ and $\pi_j(u)=1$ for $j\neq i$, then $V(\pi)$ is called the Kirillov-Reshetikhin
module. In this case, $\pi$ is denoted by $\pi_{a,m}^{(i)}$. In particular, when $m=1$,
$V(\pi)$ is called an $i$-th fundamental representation, denoted by $V_{a}(\omega_i)$,  and the $l$-tuple of polynomials is denoted by $\pi_{i,a}$. 
When $\g=\nysl$, $V(\pi)$ can be realized as a tensor product of Kirillov-Reshetikhin modules \cite{ChPr3}. For $\g\neq \nysl$, the structure of $V(\pi)$ is less clear.
Chari and Pressley showed in \cite{ChPr4} that $V(\pi)$ is a sub-quotient of a tensor product of fundamental representations $V_{a_1}(\omega_{b_1})\otimes V_{a_2}(\omega_{b_2})\otimes \ldots\otimes V_{a_k}(\omega_{b_k})$,
where the tensor factors may take any order. In the context of quantum affine algebras $U_{q}(\hat{\g})$, the irreducibility of the tensor product of fundamental representations has been studied by many authors who used a variety of methods \cite{AkKa, FrMu, Ka, MoToZh, VaVa}. This question was generalized by Chari \cite{Ch3}, who gave a concrete irreducibility condition for a tensor product of Kirillov-Reshetikhin modules of quantum affine algebras. 
We note that determining certain polynomials is the key to giving an irreducibility condition for the aforementioned tensor product and Chari connected it with defining a braid group action on the set of $l$-tuple of rational functions.

Adopting the methods in the Chari's paper, we provided an analogous concrete irreducibility condition
for the tensor product of fundamental representations of $\yg$ \cite{TaGu, Ta}, where $\g$ is either of classical type or of type $G_2$.
Since there is no analogous braid group action available
for Yangians at that time, we computed certain polynomials using some of the defining relations of $\yg$. In practice, it is tedious to compute certain polynomials in the same way when $\g$ is an exceptional
simple Lie algebra of type either $E$ or $F$, which is why we had not obtained an irreducibility condition in these cases at that moment. This motivated us to introduce a braid group action for Yangians.

We would like to mention here that it has been established by Varagnolo \cite{Va} that together with results of Nakajima \cite{Na}, $\yg$ and $U_{q}(\hat{\g})$ has the same finite-dimensional representation theory. The applications of quiver varieties is essential in these papers. More recently, Gautam and Valerio Toledano \cite{GaTo} provided an equivalence of meromorphic braided tensor categories, where the Yangians and quantum affine algebras endowed with the deformed Drinfeld coproducts. The aim of this paper is to give an independent proof of a cyclicity condition for a tensor product (the standard coproduct) of Kirillov-Reshetikhin modules of Yangians. 


This paper is organized as follows. In Section 2, the Yangian $\yg$ and its finite-dimensional representations are summerized. In Section 3, we define a
braid group action on the set of $l$-tuple of rational functions over complex numbers and show that the action is well defined. In Section 4, we show in Proposition \ref{ogpigf3p} that the eigenvalues of $h_{i}(u)$ on an extremal vector is governed by the braid group action defined in Section 3. Using this, one can find a cyclicity condition for the tensor product of finite-dimensional irreducible representations of $\yg$. In Section 5, we make this cyclicity condition explicit when the finite-dimensional irreducible representations are Kirillov-Reshetikhin modules. To conclude, we provide the structure of local Weyl modules of $\yg$, where $\g$ is simple Lie algebra of type either $E$ or $F$.

\section{Yangians and their representations}
In this section, we recall the definition of the Yangian $\yg$ and some results concerning its finite-dimensional representations.
\begin{definition}
Let $\g$ be a simple Lie algebra over $\C$ with rank $l$ and let $A=\left(a_{ij}\right)_{i,j\in I}$, where $I=\{1,2,\ldots, l\}$, be its Cartan matrix. Let $D=\operatorname{diag}\left(d_1,\ldots, d_{l}\right)$, $d_i\in \N$, such that $d_1, d_2,\ldots, d_l$ are co-prime and $DA$ is symmetric. The Yangian $Y\left(\frak g\right)$ is defined to be the
associative algebra with generators $x_{i,r}^{{}\pm{}}$,
$h_{i,r}$, $i\in I$, $r\in\Z_{\geq 0}$, and the following
defining relations:
\begin{equation*}\label{}
[h_{i,r},h_{j,s}]=0, \qquad [h_{i,0}, x_{j,s}^{\pm}]={}\pm
d_ia_{ij}x_{j,s}^{\pm}, \qquad [x_{i,r}^+, x_{j,s}^-]=\delta_{i,j}h_{i,r+s},
\end{equation*}
\begin{equation*}\label{}
[h_{i,r+1}, x_{j,s}^{\pm}]-[h_{i,r}, x_{j,s+1}^{\pm}]=
\pm\frac{1}{2}d_i
a_{ij}\left(h_{i,r}x_{j,s}^{\pm}+x_{j,s}^{\pm}h_{i,r}\right),
\end{equation*}
\begin{equation*}\label{}
[x_{i,r+1}^{\pm}, x_{j,s}^{\pm}]-
[x_{i,r}^{\pm}, x_{j,s+1}^{\pm}]=\pm\frac12
d_ia_{ij}\left(x_{i,r}^{\pm}x_{j,s}^{\pm}
+x_{j,s}^{\pm}x_{i,r}^{\pm}\right),
\end{equation*}
\begin{equation*}\label{}
\sum_\pi
[x_{i,r_{\pi\left(1\right)}}^{\pm},
[x_{i,r_{\pi\left(2\right)}}^{\pm}, \ldots,
[x_{i,r_{\pi\left(m\right)}}^{\pm},
x_{j,s}^{\pm}]\cdots]]=0, i\neq j,
\end{equation*}
for all sequences of non-negative integers $r_1,\ldots,r_m$, where
$m=1-a_{ij}$ and the sum is over all permutations $\pi$ of $\{1,\dots,m\}$.
\end{definition}
For the simplicity, we denote the generators of $\ysl$ by $x_{k}^{\pm}$ and $h_{k}$. In $\yg$, for a fixed $i\in I$, $\{x_{i,r}^{\pm}, h_{i,r}|r\geq 0\}$ generates a subalgebra $Y_i$ of $\yg$ and the assignments $\frac{\sqrt{d_i}}{{d_i}^{k+1}}x_{i,k}^{\pm}\rightarrow x_{k}^{\pm}$ and $\frac{1}{{d_i}^{k+1}}h_{i,k}\rightarrow h_{k}$ define an isomorphism from $Y_i$ to $\ysl$. We denote by $\Delta_{Y_i}$ and $\Delta_{\yg}$ the coproducts of $Y_i$ and $\yg$, respectively.
Let $Y^{\pm}$ and
$H$ be the subalgebras of $\yg$ generated by the $x_{\alpha, k}^{\pm}$, for all positive roots
$\alpha$, and $h_{i, k}$ for all $i\in I$ and $k\in \mathbb{Z}_{\geq 0}$, respectively. Set $N^{\pm}=\sum_{i,k} x_{i, k}^{\pm}Y^{\pm}$. Let $h_i(u)=1+h_{i,0}u^{-1}+h_{i,1}u^{-2}+\ldots$ and $H_{i}(u)=\sum\limits_{k=0}^{\infty} H_{i,k}u^{-k-1}:=\ln\big(h_i(u)\big)$.

\begin{lemma}[Corollary 1.5, \cite{Le}]\label{c7lecor15}
\begin{align*}
[H_{i,k}, x_{j,l}^{\pm}]=&\pm d_ia_{ij}x_{j,l+k}^{\pm}\pm\sum_{\substack{0\leq s\leq k-2\\ k+s\ \text{even}}}2^{s-k}(d_ia_{ij})^{k+1-s}\frac{{ k+1\choose s}}{k+1}x_{j,l+s}^{\pm}.
\end{align*}
\end{lemma}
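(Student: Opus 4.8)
The plan is to encode the two defining relations involving the $h$'s into one generating-function identity, pass to the logarithm $H_i(u)$, and then read off coefficients. Set $x_j^{\pm}(v)=\sum_{s\ge0}x_{j,s}^{\pm}v^{-s-1}$ and write $\beta=\tfrac12 d_ia_{ij}$. First I would check that the relations $[h_{i,0},x_{j,s}^{\pm}]=\pm d_ia_{ij}x_{j,s}^{\pm}$ and $[h_{i,r+1},x_{j,s}^{\pm}]-[h_{i,r},x_{j,s+1}^{\pm}]=\pm\beta\big(h_{i,r}x_{j,s}^{\pm}+x_{j,s}^{\pm}h_{i,r}\big)$ are together equivalent to the single identity
\begin{equation*}
(u-v\mp\beta)\,h_i(u)\,x_j^{\pm}(v)=(u-v\pm\beta)\,x_j^{\pm}(v)\,h_i(u).
\end{equation*}
This is a routine coefficient comparison: rewriting the identity as $(u-v)[h_i(u),x_j^{\pm}(v)]=\pm\beta\big(h_i(u)x_j^{\pm}(v)+x_j^{\pm}(v)h_i(u)\big)$, the coefficient of $u^{-r-1}v^{-s-1}$ with $r,s\ge0$ reproduces the recursion, while the coefficient of $u^{0}v^{-s-1}$ reproduces the $h_{i,0}$-relation, the constant term $1$ of $h_i(u)$ supplying the factor $\pm2\beta$.

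Dividing by $(u-v\mp\beta)$ and expanding $(u-v\mp\beta)^{-1}$ in nonnegative powers of $u^{-1}$ yields $h_i(u)\,x_j^{\pm}(v)\,h_i(u)^{-1}=\phi(u,v)\,x_j^{\pm}(v)$ with the scalar $\phi(u,v)=\tfrac{u-v\pm\beta}{u-v\mp\beta}$; that is, $x_j^{\pm}(v)$ is an eigenvector of $\mathrm{Ad}(h_i(u))$ with scalar eigenvalue $\phi$. The conceptual crux is to transfer this to the logarithm. Since $h_i(u)=\exp\big(H_i(u)\big)$ with $H_i(u)=O(u^{-1})$, the operator $\mathrm{ad}_{H_i(u)}$ is topologically nilpotent and equals $\log\mathrm{Ad}(h_i(u))$, a power series in $\mathrm{Ad}(h_i(u))-1$. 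Applying this power series to the eigenvector $x_j^{\pm}(v)$, and using that $\phi-1=\tfrac{\pm2\beta}{u-v\mp\beta}=O(u^{-1})$ guarantees convergence, gives
\begin{equation*}
[H_i(u),x_j^{\pm}(v)]=\big(\log\phi(u,v)\big)\,x_j^{\pm}(v)=\log\!\Big(\tfrac{u-v\pm\beta}{u-v\mp\beta}\Big)\,x_j^{\pm}(v).
\end{equation*}

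It then remains to expand the scalar and match coefficients. Using $\log\tfrac{u-v\pm\beta}{u-v\mp\beta}=\pm2\sum_{m\ge0}\tfrac{1}{2m+1}\big(\tfrac{\beta}{u-v}\big)^{2m+1}$ and expanding each $(u-v)^{-(2m+1)}$ binomially in $u^{-1}$, the coefficient of $u^{-k-1}v^{-l-1}$ becomes $\pm\sum_{0\le2m\le k}\tfrac{(d_ia_{ij})^{2m+1}}{2^{2m}(2m+1)}\binom{k}{2m}x_{j,l+k-2m}^{\pm}$. The $m=0$ term is the leading $\pm d_ia_{ij}x_{j,l+k}^{\pm}$, and substituting $s=k-2m$ turns the rest into a sum over $0\le s\le k-2$ with $k+s$ even; finally the identity $\binom{k+1}{s}=\tfrac{k+1}{k+1-s}\binom{k}{s}$ rewrites $\tfrac{1}{2^{2m}(2m+1)}\binom{k}{2m}=2^{s-k}\tfrac{\binom{k}{s}}{k+1-s}$ as $2^{s-k}\tfrac{\binom{k+1}{s}}{k+1}$, giving exactly the asserted formula. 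I expect the main obstacle to be the logarithm step, namely justifying cleanly that $\mathrm{ad}_{H_i(u)}=\log\mathrm{Ad}(h_i(u))$ so that the scalar eigenvalue passes through the logarithm; once this is secured, the reduction to the functional equation and the closing binomial bookkeeping are both routine.
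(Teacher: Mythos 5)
The paper itself offers no proof of Lemma~\ref{c7lecor15}: it is quoted verbatim from Corollary 1.5 of \cite{Le}. So your proposal can only be judged on its own terms, and it has a genuine gap at its central step. The claimed equivalence of the defining relations with the identity $(u-v\mp\beta)\,h_i(u)\,x_j^{\pm}(v)=(u-v\pm\beta)\,x_j^{\pm}(v)\,h_i(u)$ is false. Your coefficient check examined the monomials $u^{-r-1}v^{-s-1}$ and $u^{0}v^{-s-1}$, but not the monomials $u^{-r-1}v^{0}$: in $(u-v)[h_i(u),x_j^{\pm}(v)]$ the term $-v\cdot[h_{i,r},x_{j,s}^{\pm}]u^{-r-1}v^{-s-1}$ with $s=0$ produces the coefficient $-[h_{i,r},x_{j,0}^{\pm}]$ at $u^{-r-1}v^{0}$, whereas the right-hand side $\pm\beta\bigl(h_i(u)x_j^{\pm}(v)+x_j^{\pm}(v)h_i(u)\bigr)$ contains no $v^{0}$ terms at all. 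Your identity therefore forces $[h_{i,r},x_{j,0}^{\pm}]=0$ for all $r$, contradicting $[h_{i,0},x_{j,0}^{\pm}]=\pm d_ia_{ij}x_{j,0}^{\pm}$. What the defining relations actually yield is the identity with a boundary term,
\begin{equation*}
(u-v)\,[h_i(u),x_j^{\pm}(v)]=\pm\beta\bigl(h_i(u)x_j^{\pm}(v)+x_j^{\pm}(v)h_i(u)\bigr)-[h_i(u),x_{j,0}^{\pm}],
\end{equation*}
so $x_j^{\pm}(v)$ is \emph{not} an eigenvector of $\mathrm{Ad}\bigl(h_i(u)\bigr)$, and the logarithm step does not apply as written.

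The good news is that the obstruction is concentrated in nonnegative powers of $v$, so your strategy is repairable. Dividing the corrected identity by $u-v\mp\beta$ (expanded in $u^{-1}$) gives $\mathrm{Ad}\bigl(h_i(u)\bigr)x_j^{\pm}(v)=\phi(u,v)\,x_j^{\pm}(v)+R_1$, where $R_1$ --- like $\phi-1$ itself --- is a series involving only powers $v^{n}$ with $n\geq 0$. That space of series is stable under multiplication by $\phi-1$ and under $\mathrm{Ad}\bigl(h_i(u)\bigr)-1$ (conjugation acts on coefficients and does not change $v$-powers), so by induction $(\mathrm{Ad}-1)^{n}x_j^{\pm}(v)=(\phi-1)^{n}x_j^{\pm}(v)+R_n$ with each $R_n$ supported in $v^{\geq 0}$, and hence $[H_i(u),x_j^{\pm}(v)]=\log\phi(u,v)\,x_j^{\pm}(v)$ holds \emph{modulo} series in $v^{\geq 0}$. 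Since the lemma only requires the coefficients of $u^{-k-1}v^{-l-1}$ with $k,l\geq 0$, this contamination is invisible in the extraction, and your closing expansion --- including the substitution $s=k-2m$ and the identity $\binom{k+1}{s}=\tfrac{k+1}{k+1-s}\binom{k}{s}$ --- is correct and gives exactly the stated formula. As submitted, however, the proposal rests on a false identity and needs this fix.
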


It has been established that $\yg$ has a Hopf algebra structure. The coproduct of $\yg$, however, is not given explicitly in terms of the generators $x_{i,r}^{\pm}$ and $h_{i,r}$. Chari and Pressley proved the following proposition which is enough for our purpose.
\begin{proposition}[Proposition 2.8, \cite{ChPr4}]\label{Delta}\ \\
Denote $\yg$ by $Y$. Modulo $\overline{Y}\equiv \left(N^{-}Y\otimes YN^{+}\right)\bigcap \left(YN^{-}\otimes N^{+}Y\right)$, we have
\begin{itemize}
  \item [(i)] $\Delta_{\yg}\left(x_{i,k}^{+}\right)\equiv x_{i,k}^{+}\otimes 1+1\otimes x_{i,k}^{+}+\sum\limits_{j=1}^{k}h_{i,j-1}\otimes x_{i,k-j}^{+}$,
  \item [(ii)] $\Delta_{\yg}\left(x_{i,k}^{-}\right)\equiv x_{i,k}^{-}\otimes 1+1\otimes x_{i,k}^{-}+\sum\limits_{j=1}^{k}x_{i,k-j}^{-}\otimes h_{i,j-1}$,
  \item [(iii)] $\Delta_{\yg}\left(h_{i,k}\right)\equiv h_{i,k}\otimes 1+1\otimes h_{i,k}+\sum\limits_{j=1}^{k}h_{i,j-1}\otimes h_{i,k-j}$.
\end{itemize}
\end{proposition}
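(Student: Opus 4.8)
The plan is to establish (i) first by induction on $k$, and then to deduce (ii) and (iii) from it. Since $\Delta_{\yg}$ is an algebra homomorphism it suffices to work on the generators, and the degree‑one defining relation with $i=j$ (recall $a_{ii}=2$) supplies the recursion $x_{i,k+1}^{+}=\frac{1}{2d_i}[h_{i,1},x_{i,k}^{+}]-\frac12\big(h_{i,0}x_{i,k}^{+}+x_{i,k}^{+}h_{i,0}\big)$, together with $h_{i,k}=[x_{i,k}^{+},x_{i,0}^{-}]$. For the base cases, the degree‑zero generators $x_{i,0}^{\pm},h_{i,0}$ span a copy of $\g$ and are primitive, matching (i)--(iii) with empty sums, while $\Delta_{\yg}(h_{i,1})$ is the known degree‑one coproduct, which I write as $\Delta_{\yg}(h_{i,1})=P+t_i$ with main part $P=h_{i,1}\otimes1+1\otimes h_{i,1}+h_{i,0}\otimes h_{i,0}$ and off‑diagonal part $t_i$ an explicit sum of terms $x_{\beta}^{-}\otimes x_{\beta}^{+}$ over positive roots $\beta$; note $t_i\in\overline{Y}$.

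For the inductive step I write $\Delta_{\yg}(x_{i,k}^{+})=A+w_k$, where $A=x_{i,k}^{+}\otimes1+1\otimes x_{i,k}^{+}+\sum_{j=1}^{k}h_{i,j-1}\otimes x_{i,k-j}^{+}$ is the claimed main part and $w_k\in\overline{Y}$, and apply $\Delta_{\yg}$ to the recursion, expanding $\frac{1}{2d_i}[P+t_i,A+w_k]-\frac12\big((h_{i,0}\otimes1+1\otimes h_{i,0})(A+w_k)+(A+w_k)(h_{i,0}\otimes1+1\otimes h_{i,0})\big)$. I first treat the ``main $\times$ main'' part. A direct computation using only $[h,h]=0$, $[h_{i,0},x_{i,s}^{+}]=2d_ix_{i,s}^{+}$ and the recursion itself shows that the spurious contributions---terms of shape $x_{i,k}^{+}\otimes h_{i,0}$, the double‑Cartan terms $h_{i,j-1}h_{i,0}\otimes x^{+}$, and the mixed products $h_{i,j-1}\otimes(h_{i,0}x^{+}\pm x^{+}h_{i,0})$---cancel identically between the commutator and the two anticommutator‑type terms, leaving precisely $x_{i,k+1}^{+}\otimes1+1\otimes x_{i,k+1}^{+}+\sum_{j=1}^{k}h_{i,j-1}\otimes x_{i,k+1-j}^{+}$. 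This accounts for every term of the target except the top summand $h_{i,k}\otimes x_{i,0}^{+}$.

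The remaining, and main, point is the treatment of the error terms, for which the decisive fact is that $\overline{Y}$ is \emph{not} an ideal but is nonetheless stable under the operations that occur. Concretely, I will prove a stability lemma: $\overline{Y}$ is preserved by $\mathrm{ad}$ and by left/right multiplication by $h_{i,0}\otimes1$ and $1\otimes h_{i,0}$, and by $\mathrm{ad}$ of $h_{i,1}\otimes1$, $1\otimes h_{i,1}$ and of each $x_{\beta}^{-}\otimes x_{\beta}^{+}$. This rests on $N^{\pm}$ being ideals of $Y^{\pm}$ that are stable under $\mathrm{ad}(H)$, and on the observation that although $\mathrm{ad}(x^{\mp})$ does not preserve $N^{\pm}$, it does preserve the larger one‑sided ideals $YN^{\pm}$ and $N^{\pm}Y$---which is exactly why $\overline{Y}$ is defined as the intersection $(N^{-}Y\otimes YN^{+})\cap(YN^{-}\otimes N^{+}Y)$. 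Granting this, every term in the expansion that involves the opaque error $w_k$ stays in $\overline{Y}$, as does the bracket of $t_i$ with $w_k$. The single genuinely new contribution comes from commuting the $\beta=\alpha_i$ summand of $t_i$---a scalar multiple of $x_{i,0}^{-}\otimes x_{i,0}^{+}$---against the explicit main part $A$: one has $[x_{i,0}^{-}\otimes x_{i,0}^{+},\,x_{i,k}^{+}\otimes1]=[x_{i,0}^{-},x_{i,k}^{+}]\otimes x_{i,0}^{+}=-h_{i,k}\otimes x_{i,0}^{+}$, and with the normalization of $t_i$ this supplies exactly the missing term $h_{i,k}\otimes x_{i,0}^{+}$; the brackets of this summand with $1\otimes x_{i,k}^{+}$ and with $\sum_j h_{i,j-1}\otimes x_{i,k-j}^{+}$, together with every summand of $t_i$ having $\beta\neq\alpha_i$, have shape $N^{-}\otimes N^{+}$ and lie in $\overline{Y}$. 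Assembling these gives $\Delta_{\yg}(x_{i,k+1}^{+})\equiv x_{i,k+1}^{+}\otimes1+1\otimes x_{i,k+1}^{+}+\sum_{j=1}^{k+1}h_{i,j-1}\otimes x_{i,k+1-j}^{+}$, closing the induction. I expect this leakage bookkeeping---verifying the stability lemma and pinning down the one off‑diagonal term that escapes $\overline{Y}$---to be the hardest part.

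Finally, (ii) follows from (i) by symmetry: the flip $\tau$ on $\yg\otimes\yg$ composed with the (anti)automorphism of $\yg$ interchanging $x_{i,r}^{+}\leftrightarrow x_{i,r}^{-}$ and fixing the $h_{i,r}$ carries the formula in (i) to the formula in (ii), and it preserves $\overline{Y}$ because it interchanges the two factors $N^{-}Y\otimes YN^{+}$ and $YN^{-}\otimes N^{+}Y$ of the intersection. For (iii) I apply $\Delta_{\yg}$ to $h_{i,k}=[x_{i,k}^{+},x_{i,0}^{-}]$, using (i) for the first factor and the primitivity of $x_{i,0}^{-}$; by the same stability lemma the error of $\Delta_{\yg}(x_{i,k}^{+})$ contributes only to $\overline{Y}$, the brackets $[h_{i,j-1}\otimes x_{i,k-j}^{+},\,x_{i,0}^{-}\otimes1]$ produce $N^{-}\otimes N^{+}$‑type terms that drop, and the surviving brackets $[h_{i,j-1}\otimes x_{i,k-j}^{+},\,1\otimes x_{i,0}^{-}]=h_{i,j-1}\otimes h_{i,k-j}$, together with $h_{i,k}\otimes1$ and $1\otimes h_{i,k}$, assemble the sum in (iii).
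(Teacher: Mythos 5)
First, a caveat: the paper itself offers no proof of this statement --- it is quoted directly from Chari--Pressley [ChPr4, Proposition~2.8] --- so your proposal can only be measured against the standard argument, whose skeleton is indeed the one you chose: induction on $k$ through the recursion $x_{i,k+1}^{+}=\tfrac{1}{2d_i}[h_{i,1},x_{i,k}^{+}]-\tfrac12\big(h_{i,0}x_{i,k}^{+}+x_{i,k}^{+}h_{i,0}\big)$, primitivity in degree zero, the degree-one coproduct $\Delta(h_{i,1})=P+t_i$ with $t_i=-\sum_{\beta>0}(\alpha_i,\beta)\,x_{\beta}^{-}\otimes x_{\beta}^{+}$, and the extraction of the missing main term $h_{i,k}\otimes x_{i,0}^{+}$ from the $\beta=\alpha_i$ summand of $t_i$; your ``main $\times$ main'' cancellation is also correct. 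The trouble is in the error bookkeeping. Both facts on which you say your stability lemma rests are false: $N^{+}$ is \emph{not} $\mathrm{ad}(H)$-stable, since $[h_{i,1},x_{j,s}^{+}]=d_ia_{ij}x_{j,s+1}^{+}+d_ia_{ij}x_{j,s}^{+}h_{i,0}+\tfrac12(d_ia_{ij})^{2}x_{j,s}^{+}$ already leaves $Y^{+}$; and $\mathrm{ad}(x_{i,0}^{-})$ preserves neither $YN^{+}$ nor $N^{+}Y$, since $x_{i,k}^{+}$ lies in both while $[x_{i,0}^{-},x_{i,k}^{+}]=-h_{i,k}$ lies in neither (use the triangular decompositions $Y=Y^{-}H\oplus YN^{+}$ and $Y=HY^{-}\oplus N^{+}Y$). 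For part (i) this defect is repairable, because the operators that actually occur there are never single-factor: $\mathrm{ad}(x_{\beta}^{-}\otimes x_{\beta}^{+})$ is (simultaneous left multiplication) minus (simultaneous right multiplication) by $x_{\beta}^{-}\otimes x_{\beta}^{+}$, and each of these two operations does preserve $N^{-}Y\otimes YN^{+}$ and $YN^{-}\otimes N^{+}Y$, hence $\overline{Y}$.

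The genuinely fatal gap is in your deduction of (iii). There you must control $[w_k,\Delta(x_{i,0}^{-})]$, i.e.\ apply the single-factor operators $\mathrm{ad}(x_{i,0}^{-}\otimes1)$ and $\mathrm{ad}(1\otimes x_{i,0}^{-})$ to the error $w_k$, and for these ``the same stability lemma'' is not merely unproved but false: $x_{j,0}^{-}\otimes x_{i,k}^{+}\in\overline{Y}$, yet $[x_{j,0}^{-}\otimes x_{i,k}^{+},\,1\otimes x_{i,0}^{-}]=x_{j,0}^{-}\otimes[x_{i,k}^{+},x_{i,0}^{-}]=x_{j,0}^{-}\otimes h_{i,k}$ is \emph{not} in $\overline{Y}$, because $h_{i,k}\notin YN^{+}$ and therefore $x_{j,0}^{-}\otimes h_{i,k}\notin N^{-}Y\otimes YN^{+}$. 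So knowing only $w_k\in\overline{Y}$ cannot prove (iii); you must carry finer information through the induction. The standard repair is by weights: writing $Y_{\mu}$ for the weight-$\mu$ subspace of $Y$ under $\mathrm{ad}(h_{1,0}),\dots,\mathrm{ad}(h_{l,0})$, prove inductively the sharper statement $w_k\in\sum_{\gamma>0}Y_{-\gamma}\otimes Y_{\gamma+\alpha_i}$, where $\gamma$ runs over nonzero sums of positive roots; this space is visibly stable under every operator in your inductive step, since each of them either preserves the weights of the two legs or transfers weight from one leg to the other. The triangular decomposition gives $Y_{-\gamma}\subseteq N^{-}Y\cap YN^{-}$ and $Y_{\delta}\subseteq YN^{+}\cap N^{+}Y$ for all $\gamma,\delta>0$, so the sharper statement still yields $w_k\in\overline{Y}$, hence (i); and it also gives $[w_k,\Delta(x_{i,0}^{-})]\in\sum_{\gamma>0}\big(Y_{-\gamma}\otimes Y_{\gamma}+Y_{-\gamma-\alpha_i}\otimes Y_{\gamma+\alpha_i}\big)\subseteq\overline{Y}$, which is exactly what your computation of (iii) needs. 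With that refinement in place of your stability lemma, your outline becomes a correct proof; as written, it does not close.
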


We are in the position to introduce the finite-dimensional highest weight representations.
In \cite{Dr2}, Drinfeld gave the following classification of the finite-dimensional irreducible representations of $\yg$.
\begin{theorem}\label{cfdihwr}\
\begin{enumerate}
  \item[(i)] Every irreducible finite-dimensional representation of $\yg$ is highest weight.
  \item[(ii)] The irreducible representation $L\left(\mu\right)$ is finite-dimensional if and only if there exists an $l$-tuple of polynomials $\pi=\big(\pi_1(u),\ldots, \pi_l(u)\big)$ such that $$\frac{\pi_i\left(u+d_i\right)}{\pi_i\left(u\right)}=1+\sum_{k=0}^{\infty} \mu_{i,k}u^{-k-1},$$ in the sense that the right-hand side is the Laurent expansion of the left-hand side about $u=\infty$. The polynomials $\pi_i\left(u\right)$ are called Drinfeld polynomials.
\end{enumerate}
\end{theorem}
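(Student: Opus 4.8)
The plan is to prove the two assertions by rather different means: part (i) by a weight-space argument built on the triangular decomposition of $\yg$, and part (ii) by reducing everything to the rank-one case $\ysl$ via the subalgebras $Y_i$ introduced above. For part (i) I would take as structural input the PBW-type decomposition $\yg\cong Y^{-}\otimes H\otimes Y^{+}$ of vector spaces. Given a finite-dimensional irreducible $\yg$-module $V$, restricting along the embedding $U(\g)\hookrightarrow\yg$ generated by the $x_{i,0}^{\pm}$ and $h_{i,0}$ makes $V$ a finite-dimensional $\h$-module, hence a finite direct sum of weight spaces for the commuting operators $h_{i,0}$. The relation $[h_{i,0},x_{j,s}^{\pm}]=\pm d_i a_{ij}x_{j,s}^{\pm}$ shows that each $x_{j,s}^{+}$ raises the $\h$-weight by the simple root $\alpha_j$, so a weight $\Lambda$ maximal in the usual partial order satisfies $x_{j,s}^{+}V_{\Lambda}=0$ for all $j,s$. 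Since $H$ is commutative and preserves $V_{\Lambda}$, it has a common eigenvector $v\in V_{\Lambda}$, necessarily a highest weight vector, and irreducibility with the decomposition gives $V=\yg\,v=Y^{-}HY^{+}v=Y^{-}v$; thus $V$ is a highest weight module.

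For part (ii) I would first settle $\g=\nysl$ and then bootstrap. In $\ysl$ with highest weight vector $v$ and $h(u)v=\lambda(u)v$, restriction to the classical $U(\nysl)$ forces $(x_0^{-})^{N+1}v=0$ for $N=\mu_0\in\Z_{\geq 0}$; applying the generators $x_r^{+}$ to the string $(x_0^{-})^{k}v$ and using the defining relations together with Lemma \ref{c7lecor15} yields a recursion pinning $\lambda(u)$ down to the form $P(u+1)/P(u)$ for a monic polynomial $P$ of degree $N$. Conversely, given such a $P$, one realizes $L(\mu)$ as the irreducible quotient of the cyclic submodule generated by the tensor of highest weight vectors inside a tensor product of two-dimensional evaluation modules, the Drinfeld polynomial being read off from the coproduct formulas of Proposition \ref{Delta}. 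For general $\g$, necessity follows by restricting the highest weight vector to each $Y_i\cong\ysl$: the cyclic $Y_i$-module it generates is finite-dimensional, so the rank-one result produces a monic $P_i$, and unwinding the rescaling $\tfrac{1}{d_i^{k+1}}h_{i,k}\mapsto h_k$ turns the shift by $1$ into a shift by $d_i$, giving $\pi_i$ with $\pi_i(u+d_i)/\pi_i(u)=1+\sum_{k}\mu_{i,k}u^{-k-1}$.

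For the sufficiency direction in the general case, I would factor each $\pi_i$ into linear factors, form the associated fundamental modules $V_{a}(\omega_i)$, and take a tensor product $\bigotimes_j V_{a_j}(\omega_{b_j})$; Proposition \ref{Delta} shows the tensor of highest weight vectors is again a highest weight vector whose $l$-tuple of Drinfeld polynomials is the product $\prod_j\pi_{b_j,a_j}=\pi$, so its irreducible quotient is the desired finite-dimensional $L(\mu)$. I expect the main obstacle to lie in two places. The first is the rank-one necessity computation, where the rationality of $\lambda(u)$ must be extracted from the single finiteness relation $(x_0^{-})^{N+1}v=0$; this is the technical heart and demands careful bookkeeping with the series $h(u)$ and $H_i(u)$. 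The second, and more serious for arbitrary $\g$, is the existence of the fundamental representations $V_{a}(\omega_i)$ realizing each $\pi_{i,a}$: outside type $A$ there is no evaluation homomorphism $\yg\to U(\g)$, so these modules must be built by other means and the identification of their Drinfeld polynomials is the delicate point on which the sufficiency argument rests.
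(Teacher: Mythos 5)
First, a point of context: the paper does not prove this theorem at all. It is quoted as a classification result of Drinfeld \cite{Dr2} (the published proofs being due to Chari and Pressley \cite{ChPr4}), so there is no in-paper argument to compare yours against; your proposal has to be judged as a free-standing proof of a deep cited result. On that footing, your part (i) is essentially complete and is the standard argument: Weyl's theorem gives a weight decomposition for the commuting $h_{i,0}$, the relation $[h_{i,0},x_{j,s}^{\pm}]=\pm d_ia_{ij}x_{j,s}^{\pm}$ shows $x_{j,s}^{+}$ raises weights, a maximal weight space is preserved by the commutative algebra $H$ and so contains a common eigenvector $v$, and the PBW-type decomposition $\yg\cong Y^{-}\otimes H\otimes Y^{+}$ (a legitimate citation, cf. \cite{Le}) gives $V=\yg\,v=Y^{-}v$. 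Your rescaling bookkeeping in part (ii) — that the shift by $1$ in the $\ysl$ result becomes a shift by $d_i$ under $\frac{1}{d_i^{k+1}}h_{i,k}\mapsto h_k$ — is also correct.

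However, part (ii) has two genuine gaps, both of which you flag yourself and neither of which you fill. First, the rank-one necessity statement — that finite-dimensionality of an irreducible $\ysl$-module forces $\lambda(u)=P(u+1)/P(u)$ — is the technical core of the entire theorem; writing that applying $x_r^{+}$ to the string $(x_0^{-})^{k}v$ ``yields a recursion pinning $\lambda(u)$ down'' is an assertion of the conclusion, not an argument, and the actual computation (carried out in \cite{ChPr4}, and reflected in the explicit $\ysl$-theory of Proposition \ref{wra} and \cite{ChPr3}) is several pages of delicate work with the relations. Second, and more seriously, your sufficiency argument for general $\g$ rests entirely on the existence of fundamental representations $V_{a}(\omega_i)$ whose $l$-tuple of Drinfeld polynomials is $\pi_{i,a}$; you correctly observe that outside type $A$ there is no evaluation homomorphism and that these modules ``must be built by other means,'' but you supply no such means. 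That existence statement is precisely what makes sufficiency nontrivial — it is the main content of \cite{ChPr4} — so at this point the proposal assumes what it must prove. Until those two steps are supplied, what you have is an accurate roadmap of the known proof rather than a proof.
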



Note that every finite-dimensional highest-weight representation is also a lowest weight representation and vice-versa. The following lemma is an analogue of Lemma 4.2 in \cite{Ch3}. The proof has been omitted as it is similar to the proof of that Lemma.
\begin{lemma}\label{v-w+gvtw}
Let $V$ and $W$ be two finite-dimensional highest weight representations of $\yg$ with lowest and highest weight vectors $v^{-}$ and $w^{+}$, respectively. Then $v^{-}\otimes w^{+}$generates $V\otimes W$.
\end{lemma}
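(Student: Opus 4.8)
The plan is to let $U\subseteq V\otimes W$ be the $\yg$-submodule generated by $v^{-}\otimes w^{+}$ and to prove $U=V\otimes W$ in two stages, the engine of both being that the congruences of Proposition \ref{Delta} become \emph{exact} identities when evaluated on the two ``axes'' $\xi\otimes w^{+}$ and $v^{-}\otimes\eta$. Indeed, since $w^{+}$ is a highest weight vector we have $N^{+}w^{+}=0$, so $YN^{+}$ annihilates $w^{+}$; as $\overline{Y}\subseteq N^{-}Y\otimes YN^{+}$, every element of $\overline{Y}$ kills $\xi\otimes w^{+}$ for all $\xi\in V$. Dually, $v^{-}$ is a lowest weight vector, $N^{-}v^{-}=0$, and $\overline{Y}\subseteq YN^{-}\otimes N^{+}Y$ forces $\overline{Y}$ to kill $v^{-}\otimes\eta$ for all $\eta\in W$. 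Hence parts (i)--(iii) of Proposition \ref{Delta} hold on the nose on such vectors.

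First I would establish $V\otimes w^{+}\subseteq U$. Since $V$ is simultaneously a lowest weight representation, it is generated from $v^{-}$ by the raising subalgebra, $V=Y^{+}v^{-}$. Evaluating $\Delta_{\yg}(x_{i,k}^{+})$ on a vector $\xi\otimes w^{+}$ via Proposition \ref{Delta}(i), the terms $1\otimes x_{i,k}^{+}$ and $\sum_{j}h_{i,j-1}\otimes x_{i,k-j}^{+}$ vanish (because $x_{i,s}^{+}w^{+}=0$), leaving exactly $x_{i,k}^{+}\xi\otimes w^{+}$. The output is again of the form ``$(\,\cdot\,)\otimes w^{+}$'', so the step iterates without ever leaving the axis, and starting from $v^{-}\otimes w^{+}$ it sweeps out $Y^{+}v^{-}\otimes w^{+}=V\otimes w^{+}$. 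The mirror-image computation on the axis $v^{-}\otimes\eta$, now with lowering generators and using $x_{i,s}^{-}v^{-}=0$, gives $\Delta_{\yg}(x_{i,k}^{-})(v^{-}\otimes\eta)=v^{-}\otimes x_{i,k}^{-}\eta$ exactly; iterating produces the second axis $v^{-}\otimes W=v^{-}\otimes Y^{-}w^{+}\subseteq U$.

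It remains to fill in the interior vectors $\xi\otimes\eta$ with $\xi\neq v^{-}$ and $\eta\neq w^{+}$. Here I would argue by downward induction on the weight of the $W$-factor, i.e.\ on the depth of $\eta$ below the highest weight $\mu$ of $W$, with base case $V\otimes w^{+}\subseteq U$. For the step, given $\eta=x_{i,k}^{-}\eta'$ with $\eta'$ at depth $b$ and $\xi\otimes\eta'\in U$ already known, one expands $\Delta_{\yg}(x_{i,k}^{-})(\xi\otimes\eta')$ using Proposition \ref{Delta}(ii): the summand $x_{i,k}^{-}\xi\otimes\eta'$ and the correction $\sum_{j}x_{i,k-j}^{-}\xi\otimes h_{i,j-1}\eta'$ keep the $W$-factor at weight $\operatorname{wt}(\eta')$ (since $h_{i,j-1}$ preserves weight), hence lie at depth $\le b$ and are already in $U$, while the remaining summand is the desired $\xi\otimes x_{i,k}^{-}\eta'$.

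The hard part will be that Proposition \ref{Delta} only pins down $\Delta_{\yg}$ modulo $\overline{Y}$, and $\overline{Y}$ does \emph{not} annihilate the generic interior vector $\xi\otimes\eta'$ appearing in the induction. Thus the real content of the step is to show that the correction $z\cdot(\xi\otimes\eta')$, with $z\in\overline{Y}$, already lies in the part of $U$ built at earlier stages. This is exactly where the two containments defining $\overline{Y}$ are used: writing $z\in N^{-}Y\otimes YN^{+}$ exhibits each correction with its second tensor factor in $YN^{+}\eta'$, where the rightmost factor coming from $N^{+}$ strictly raises the $W$-weight before anything else acts. After organizing the induction appropriately (a nested induction on the $W$-depth of $\eta$ and the $V$-codepth of $\xi$, together with the two axes above and the scalar action of $H$ on $v^{-}$ and $w^{+}$), these corrections are controlled by the inductive hypothesis. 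This is the single genuinely delicate point, and it is precisely the computation paralleling Lemma~4.2 of \cite{Ch3}; once it is in place the induction closes and $U=V\otimes W$.
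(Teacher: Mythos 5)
Your two ``axis'' computations are correct, and they are indeed how such an argument must begin: since $N^{+}w^{+}=0$ and $N^{-}v^{-}=0$, the containments $\overline{Y}\subseteq N^{-}Y\otimes YN^{+}$ and $\overline{Y}\subseteq YN^{-}\otimes N^{+}Y$ do make Proposition \ref{Delta} exact on $V\otimes w^{+}$ and on $v^{-}\otimes W$, yielding $V\otimes w^{+}\subseteq U$ and $v^{-}\otimes W\subseteq U$. (Note that the paper itself contains no written proof: it omits the argument entirely, citing Lemma 4.2 of \cite{Ch3}.) The genuine gap is the interior step, which you yourself call ``the single genuinely delicate point'' and then do not carry out. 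Asserting that the corrections ``are controlled by the inductive hypothesis'' after ``organizing the induction appropriately,'' and deferring the actual computation to Lemma 4.2 of \cite{Ch3}, is not a proof: that computation \emph{is} the lemma, since the two axes are the easy part.

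Worse, the one concrete mechanism you offer is unsound as stated. You argue that for $z\in\overline{Y}\subseteq N^{-}Y\otimes YN^{+}$ the second tensor factor of $z(\xi\otimes\eta')$ lies in $YN^{+}\eta'$, ``where the rightmost factor coming from $N^{+}$ strictly raises the $W$-weight before anything else acts.'' But the left factor $Y$ contains every lowering operator, so after $N^{+}$ raises the weight, $Y$ can lower it back down arbitrarily: for example $x_{j,m_{1}}^{-}x_{j',m_{2}}^{-}x_{j,m}^{+}\in YN^{+}$, and applied to $\eta'$ it can produce nonzero vectors of weight strictly \emph{below} $\operatorname{wt}(\eta')$. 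In fact the span of $YN^{+}\eta'$ is the whole submodule generated by $N^{+}\eta'$, which meets weight spaces of arbitrary depth; hence the correction terms need not have their $W$-components at depth $\leq b$, and your downward induction on $W$-depth does not close. The symmetric containment $\overline{Y}\subseteq YN^{-}\otimes N^{+}Y$ fails the same way on the first factor (it only places it in the submodule generated by $N^{-}\xi$), and even using both simultaneously only confines the correction to $N^{-}V\otimes N^{+}W$ --- first factor missing the top weight space of $V$, second factor missing the bottom weight space of $W$ --- which is still not contained in the region your nested induction has already established. Closing this step requires finer control of the correction terms than these containments give when used this way; that finer analysis is exactly what Chari's Lemma 4.2 supplies in the quantum affine setting and what a self-contained proof here would have to reproduce. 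As written, your argument establishes only $V\otimes w^{+}+v^{-}\otimes W\subseteq U$, not the lemma.
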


We now turn our attention to the representation theory of $\ysl$. Denote by $V_m(a)$ the finite-dimensional irreducible representation of $\ysl$ associated to the Drinfeld polynomial $\big(u-a\big)\big(u-(a_1+1)\big)\ldots \big(u-(a+m-1)\big)$. $V_m(a)$ is called evaluation representation of $\ysl$; as a $\nysl$-module, $V_m(a)$ is isomorphic to the highest weight $\nysl$-module with highest weight $m$.

\begin{proposition}[Proposition 3.5,\cite{ChPr3}]\label{wra}
For any $m\geq 1$, $V_m(a)$ has a basis \\
$\{w_0,w_1,\ldots, w_m\}$ on which the action of $\ysl$ is given by
\begin{center}
$ x_k^+w_s=(s+a)^k(s+1)w_{s+1},\ \ x_k^-w_s=(s+a-1)^k(m-s+1)w_{s-1},$\\

$h_kw_s=\big((s+a-1)^ks(m-s+1)-(s+a)^k(s+1)(m-s)\big)w_s$.
\end{center}
\end{proposition}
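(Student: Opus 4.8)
The plan is to work entirely inside the given module $V_m(a)$, using only relations that necessarily hold there, so that the Yangian axioms never need to be re-verified. By hypothesis the restriction of $V_m(a)$ to the subalgebra generated by $x_0^{\pm},h_0$, which is a copy of $U(\nysl)$, is the irreducible module of highest weight $m$; in particular each $h_0$-weight space is one-dimensional. I would fix a weight basis $w_0,\dots,w_m$, with $w_s$ of $h_0$-weight $2s-m$ and normalized by $x_0^+w_s=(s+1)w_{s+1}$, and set $w_{-1}=w_{m+1}=0$. Since $[h_0,x_k^{\pm}]=\pm 2x_k^{\pm}$, every generator is homogeneous for this grading, so there are scalars with $x_k^+w_s=c_{k,s}w_{s+1}$, $x_k^-w_s=d_{k,s}w_{s-1}$ and $h_kw_s=e_{k,s}w_s$; the whole proposition is the assertion that $c_{k,s}=(s+1)(s+a)^k$, $d_{k,s}=(m-s+1)(s+a-1)^k$ and the stated value of $e_{k,s}$. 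The relation $[x_0^+,x_0^-]=h_0$ fixes the degree-zero scalars and yields the $k=0$ formulas.

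Next I would treat the degree-one generators. The $r=s=0$ instance of the relation $[x_{r+1}^+,x_s^+]-[x_r^+,x_{s+1}^+]=x_r^+x_s^++x_s^+x_r^+$ reads $[x_1^+,x_0^+]=(x_0^+)^2$; evaluating on $w_s$ produces the recursion $(s+1)c_{1,s+1}-(s+2)c_{1,s}=(s+1)(s+2)$, whose general solution is $c_{1,s}=(s+1)(s+C)$ for a single free constant $C$. The parallel ``$-$'' relation gives $d_{1,s}=(m-s+1)(s-1+C)$, and $h_1=[x_1^+,x_0^-]$ is then diagonal. The constant $C$ is the evaluation parameter and cannot be read off from the relations alone; here the representation-theoretic input enters. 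By Theorem \ref{cfdihwr} the module $V_m(a)$ has Drinfeld polynomial $\pi(u)=\prod_{j=0}^{m-1}(u-a-j)$, so on the highest weight vector $w_m$ one has $1+\sum_k e_{k,m}u^{-k-1}=\pi(u+1)/\pi(u)=(u-a+1)/(u-a-m+1)$; extracting the coefficient of $u^{-2}$ gives $e_{1,m}=m(m+a-1)$, and comparing with $e_{1,m}=c_{1,m-1}=m(m-1+C)$ forces $C=a$. This establishes the $k=1$ case.

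With $h_0$ and $h_1$ known, I would induct on $k$ using the mixed relation $[h_{r+1},x_s^{\pm}]-[h_r,x_{s+1}^{\pm}]=\pm(h_rx_s^{\pm}+x_s^{\pm}h_r)$ at $r=0$, $s=k$. Since $[h_0,x_{k+1}^{\pm}]=\pm 2x_{k+1}^{\pm}$, this rearranges to $2x_{k+1}^{\pm}=\pm[h_1,x_k^{\pm}]-(h_0x_k^{\pm}+x_k^{\pm}h_0)$, which expresses $x_{k+1}^{\pm}$ through degree-$\le 1$ data and $x_k^{\pm}$ with no undetermined constant. Evaluating on $w_s$ and using the explicit $e_{1,s}$ collapses these to the one-step recursions $c_{k+1,s}=(s+a)c_{k,s}$ and $d_{k+1,s}=(s+a-1)d_{k,s}$, which immediately propagate the closed forms for $c_{k,s}$ and $d_{k,s}$. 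Finally $h_k=[x_k^+,x_0^-]$ gives $e_{k,s}=(m-s+1)s(s+a-1)^k-(s+1)(m-s)(s+a)^k$, the stated formula for $h_k$.

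The genuine content, and the main obstacle, is concentrated in the degree-one step rather than in the induction: one must recognize that the Drinfeld relations leave exactly one free scalar and that this scalar is the evaluation parameter, which is pinned down only by feeding in the Drinfeld polynomial of $V_m(a)$ through the highest weight vector. The remaining difficulty is organizational, namely choosing, as above, the $r=0$ form of the $h$--$x$ relation so that the inductive step carries no integration constant, and checking the two boundary cases $s=0$ and $s=m$, where $w_{-1}$ and $w_{m+1}$ vanish, so that the recursions terminate consistently in an $(m+1)$-dimensional module. Once these are in place the computation reduces to the elementary polynomial identity $\frac{1}{2}\big(e_{1,s+1}-e_{1,s}-4s-2+2m\big)=s+a$ and its $x^-$ analogue.
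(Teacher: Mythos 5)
First, a point of reference: the paper offers no proof of Proposition \ref{wra} at all --- it is quoted verbatim from \cite{ChPr3} --- so your proposal can only be compared with the argument in the cited source. That argument runs in the opposite direction from yours: one takes an $(m+1)$-dimensional space with basis $\{w_0,\ldots,w_m\}$, \emph{defines} the action of $x_k^{\pm}$, $h_k$ by the displayed formulas, verifies (directly, or via the evaluation homomorphism $\ysl\rightarrow U(\nysl)$) that the defining relations of $\ysl$ hold, notes that the resulting module is irreducible already over $\nysl$, reads off the eigenvalues of $h(u)$ on $w_m$, and invokes the uniqueness part of Theorem \ref{cfdihwr} to identify this module with $V_m(a)$. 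Your route is genuinely different: a rigidity argument inside the abstractly given $V_m(a)$, showing that the formulas are \emph{forced} by the weight grading, a handful of the defining relations, and the Drinfeld polynomial. Your computations are correct --- I checked the recursion $(s+1)c_{1,s+1}-(s+2)c_{1,s}=(s+1)(s+2)$ and its solution, the normalization $C=a$ from the coefficient of $u^{-2}$ in $(u-a+1)/(u-a-m+1)$, the inductive identity $\tfrac{1}{2}(e_{1,s+1}-e_{1,s}-4s-2+2m)=s+a$, and the final evaluation of $h_k=[x_k^+,x_0^-]$. What your approach buys is that one never has to check the full list of Yangian relations, which is the tedious part of the cited proof.

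Two caveats. The serious one: your ``hypothesis'' that $V_m(a)$ restricts to the irreducible $\nysl$-module of highest weight $m$ is exactly the ``has a basis $\{w_0,\ldots,w_m\}$'' clause of the proposition, i.e.\ the claim $\dim V_m(a)=m+1$. This is a special feature of string Drinfeld polynomials (for $\pi(u)=(u-a)(u-b)$ with $b-a\neq\pm 1$ the restriction is four-dimensional and not irreducible), and in \cite{ChPr3} it is \emph{proved} by the very construction your argument replaces. The present paper does assert this restriction fact in the sentence preceding the proposition, so inside this paper your proof closes; but as a standalone proof of the cited result it is circular at exactly this point, and so the ``genuine content'' is located there rather than, as you claim, in the degree-one step. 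The minor one: the relation $[x_1^-,x_0^-]=-(x_0^-)^2$ produces a free constant $C'$ of its own, so writing $d_{1,s}=(m-s+1)(s-1+C)$ with the \emph{same} $C$ is unjustified as stated. Fortunately this claim is never used downstream: your induction regenerates $d_{1,s}$ at $k=0$ from $e_{1,s}$, which you compute from $h_1=[x_1^+,x_0^-]$ using only $c_{1,\cdot}$. Alternatively, evaluating $h_1=[x_0^+,x_1^-]$ on $w_m$ gives $m\,d_{1,m}=e_{1,m}=m(m+a-1)$, forcing $C'=a$; either add this line or delete the unsupported claim.
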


Call that a polynomial $P$ is in general position with respect to another polynomial $Q$ if for any root $t$ of $Q$ and any root $s$ of $P$, we have $t-s\neq 1$.
The following lemma is from paraphrasing Corollary 3.8 \cite{ChPr3}.
\begin{lemma}\label{beautiful}
Let $P_1, P_2,\ldots, P_m$ be polynomials, and let $V(P_i)$ be the irreducible highest weight representation whose Drinfeld polynomial is $P_i$. If $P_j$ is in general position with respect to $P_i$ ($i<j$), then $V(P_1)\otimes V(P_2)\otimes \ldots\otimes V(P_m)$ is a highest weight $\ysl$-module.
\end{lemma}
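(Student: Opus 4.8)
The plan is to induct on $m$, the case $m=1$ being trivial. For the inductive step I would peel off the first tensor factor. Set $W = V(P_2)\otimes\cdots\otimes V(P_m)$; the pairwise general position relations among $P_2,\dots,P_m$ are part of the hypothesis, so the inductive hypothesis makes $W$ a highest weight $\ysl$-module with highest weight vector $w^+ = v_2^+\otimes\cdots\otimes v_m^+$. By Theorem \ref{cfdihwr} together with the comultiplication of $h(u)=1+\sum_{k\ge0}h_k u^{-k-1}$ in Proposition \ref{Delta}(iii), the eigenvalue of $h(u)$ on $w^+$ is $\frac{Q(u+1)}{Q(u)}$ with $Q=P_2\cdots P_m$. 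It therefore suffices to prove the following two-factor statement: if $V$ is an irreducible finite-dimensional highest weight module with Drinfeld polynomial $P$ and highest weight vector $v^+$, and $W$ is a finite-dimensional highest weight module whose weight data $Q$ is in general position with respect to $P$ (exactly the condition that each $P_j$, $j\ge 2$, be in general position with respect to $P_1$), then $v^+\otimes w^+$ generates $V\otimes W$.

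For this two-factor statement I would first apply Lemma \ref{v-w+gvtw}: the vector $v^-\otimes w^+$, with $v^-$ the lowest weight vector of $V$, already generates $V\otimes W$. Hence it is enough to show that $v^-\otimes w^+$ lies in the submodule $U' := \langle v^+\otimes w^+\rangle$. The crucial simplification is that the congruences of Proposition \ref{Delta} become exact identities on the subspace $V\otimes\C w^+$: since $w^+$ is a highest weight vector we have $N^+w^+=0$, and the error space $\overline{Y}$, lying in $Y\otimes(Y N^+)$, therefore annihilates $V\otimes\C w^+$. Writing $x^-(u)=\sum_{k\ge0}x_k^-u^{-k-1}$, Proposition \ref{Delta}(ii) then gives, for $\xi\in V$,
\begin{equation*}
\Delta(x^-(u))(\xi\otimes w^+) = \tfrac{Q(u+1)}{Q(u)}\,(x^-(u)\xi)\otimes w^+ + \xi\otimes(x^-(u)w^+),
\end{equation*}
so the part preserving the second factor is governed by the twisted series $\tfrac{Q(u+1)}{Q(u)}x^-(u)$.

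Because the scalar series $\tfrac{Q(u+1)}{Q(u)}$ has constant term $1$, its coefficients effect a unipotent, lower-triangular change of basis, so degree by degree the coefficients of $\tfrac{Q(u+1)}{Q(u)}x^-(u)$ and of $x^-(u)$ span the same operators. As $V$ is irreducible we have $V=Y^-v^+$, hence $v^-\in Y^-v^+$, and so a suitable lowering operator applied to $v^+\otimes w^+$ produces a nonzero multiple of $v^-\otimes w^+$ together with correction terms of strictly lower second-factor weight (equivalently, of strictly higher first-factor weight than $v^-$); all of this lies in $U'$.

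The remaining, and genuinely hard, step is to show that these corrections can be cancelled modulo $U'$, so that $v^-\otimes w^+$ itself lies in $U'$. This is the only place the general position hypothesis enters, for without it $V\otimes W$ need not be cyclic. I would carry it out by a downward induction on the second-factor weight, using the explicit evaluation-module formulas of Proposition \ref{wra} to compute the successive lowerings (reducing the general case to evaluation modules as in \cite{ChPr3}); the coefficient with which the targeted extremal vector finally survives is, up to a nonzero constant, a product over pairs $(t,s)$ — with $t$ a root of $P$ and $s$ a root of $Q$ — of the linear factors $t-s-1$, and the hypothesis $t-s\neq1$ says precisely that none of these vanishes. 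The main obstacle is exactly this propagation-and-cancellation bookkeeping for the lower-weight corrections, which is what makes the explicit computation, together with a Vandermonde-type nondegeneracy in the available coefficients of the $x_k^-$, unavoidable.
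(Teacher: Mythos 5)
You should note at the outset that the paper contains no proof of this lemma to compare against: it is imported verbatim, with the sentence ``The following lemma is from paraphrasing Corollary 3.8 \cite{ChPr3}''. So your argument has to stand entirely on its own, and in its present form it does not. The reductions you do carry out are correct, and they are the same moves the paper makes one level up, in the proof of Theorem \ref{mtimtp}: induction on $m$, peeling off the first factor, invoking Lemma \ref{v-w+gvtw} to reduce everything to showing $v^-\otimes w^+\in U':=\ysl\left(v^+\otimes w^+\right)$, and the observation that $\overline{Y}\subseteq N^-Y\otimes YN^+$ annihilates all of $V\otimes\C w^+$, so that Proposition \ref{Delta} yields the exact twisted identity you display. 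But that is the routine part. The entire mathematical content of the lemma is the step you merely announce with ``I would carry it out by a downward induction on the second-factor weight'': proving that the corrections $\xi\otimes x^-(u)w^+$ can be cancelled modulo $U'$ and that the extremal vector survives with nonzero coefficient. That step --- which is exactly what Chari--Pressley's Corollary 3.8 establishes --- is never performed; even your preliminary ``reduction to evaluation modules'' silently relies on the tensor-product realization of an arbitrary irreducible $\ysl$-module and an iteration of the two-factor argument with general-position hypotheses that have to be re-verified at each stage. A plan for the hard step is not a proof of it.

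Moreover, the one concrete claim you make about the deferred computation is wrong in its direction, and faithfully executing your own plan would expose this. Take $V=V_1(a)$ and $W=V_1(b)$, so $P=u-a$, $Q=u-b$, with highest (resp.\ lowest) weight vectors $v^{\pm}$, $w^{\pm}$. Your displayed identity, together with Proposition \ref{wra}, gives
\begin{equation*}
\Delta\big(x^-(u)\big)\left(v^+\otimes w^+\right)=\frac{u+1-b}{(u-a)(u-b)}\,v^-\otimes w^+ \;+\;\frac{1}{u-b}\,v^+\otimes w^-,
\end{equation*}
and, since $\Delta(x_k^+)$ kills $v^+\otimes w^+$ while $\Delta(h_k)$ scales it, the zero-weight part of $U'$ is spanned by the coefficients of this series. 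The two rational functions are proportional precisely when $b=a+1$, in which case $U'$ meets the two-dimensional zero-weight space in a line and $V\otimes W$ is \emph{not} highest weight. Thus the genuine obstruction factor is $s-t-1$, with $s$ a root of $Q$ and $t$ a root of $P$, not $t-s-1$ as you assert; the hypothesis you assume, $t-s\neq 1$, does not exclude $b=a+1$, so your induction cannot close. The source of the confusion is an inconsistency in the paper itself: the lemma's hypothesis, read against the paper's definition of general position, is the reverse of the one actually used when the lemma is invoked in the proof of Theorem \ref{mtimtp}, where it is the \emph{first} factor's polynomial $Q$ that is required to be in general position with respect to the polynomials of the later factors. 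A correct write-up must use that direction --- no root of a later $P_j$ may exceed a root of an earlier $P_i$ by exactly $1$ --- and must then verify that the surviving coefficient is a product of the corresponding factors $s-t-1$; your proposal does neither, and a referee-proof version would also flag the misstatement rather than build on it.
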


We close this section by introducing the local Weyl module $W(\pi)$ of $\yg$. It has been established that the category of finite-dimensional
representations of $\yg$ is not semi-simple. Therefore, it is not enough to just study its irreducible representations. In
\cite{ChPr1}, the notion of local Weyl modules for classical and quantum affine algebra was introduced, and we extended this notion for Yangians \cite{TaGu}. The local Weyl module $W(\pi)$ has a nice universal property:  any finite-dimensional highest weight representation of $\yg$ associated to $\pi$ is a quotient of $W(\pi)$.
\begin{definition}
Let $\pi_i\left(u\right)=\prod\limits_{j=1}^{m_i}\left(u-a_{i,j}\right)$ and $\pi=\big(\pi_1(u),\pi_2(u),\ldots, \pi_l(u)\big)$. The local Weyl module $W\left(\mathbf{\pi}\right)$ for $\yg$ is defined as the module generated by a highest weight vector $w_\pi$ that satisfies the following relations: \begin{equation*}
x_{i,k}^{+}w_\pi=0, \qquad \left(x_{i,0}^{-}\right)^{m_i+1}w_\pi=0,\qquad \left(h_{i}\left(u\right)-\frac{\pi_i\left(u+d_i\right)}{\pi_i\left(u\right)}\right)w_\pi=0.\end{equation*}
\end{definition}
We showed in Theorem 3.8 of \cite{TaGu} that $\operatorname{Dim}\big(W(\pi)\big)\leq \operatorname{Dim}\big(W(\lap)\big)$, where $\lap=\sum\limits_{i=1}^{l} m_i\lambda_i$ and $W(\lap)$ is the local Weyl module of the corresponding current algebra.
The structure of the local Weyl modules of $\yg$ was provided in Theorem 5.23 of \cite{TaGu}, where $\g$ is classical, and Theorem 2 \cite{Ta}, where $\g$ is the simple Lie algebra of type $G_2$. In the end of this paper, we will provide the structure of the local Weyl module of $\yg$, when $\g$ is of type $E$ or $F$.

\section{Braid group action}
 Let $\cal{B}$ be the braid group associated to $\g$, which is the group generated by elements
$T_i$ ($i\in I$) with defining relations:
\begin{align*}
T_iT_j &=T_jT_i,\ \ \text{if}\ \ a_{ij} =0,\\
T_iT_jT_i& =T_jT_iT_j,\ \ \text{if}\ \ a_{ij}a_{ji} =1,\\
(T_iT_j)^2&= (T_jT_i)^2,\ \ \text{if}\ \ a_{ij}a_{ji}=2,\\
(T_iT_j)^3&= (T_jT_i)^3,\ \ \text{if}\  \ a_{ij}a_{ji}
=3,\end{align*} where $i,j\in\{1,2,\cdots ,l\}$. If $w=s_{r_1}s_{r_2}\ldots s_{r_p}$ is a reduced expression of $w$, then denote $T_{r_1}T_{r_2}\ldots T_{r_p}$ by $T_{w}$.

Recall that $\pi_{i,a}$ is the $l$-tuple of polynomials in $u$ such that $\pi_i(u)=u-a$ and $\pi_j(u)=1$. Let $\cal{P}$ be the set of $l$-tuple of rational functions in $u$. Then $\cal{P}$ is a group with generators $\pi_{i,a}$. We now define an action of a braid group of $\g$ on
$\cal{P}$.
\begin{proposition}\label{braidp21} let $P=\big(P_1(u), P_2(u), \cdots, P_l(u)\big)
\in\cal{P}$ and $w=s_{r_1}s_{r_2}\ldots s_{r_p}$ be a reduced expression of $w$. The following formulas define an action of
the braid group $\cal{B}$ of $\g$ on $ \cal{P}$: $T_w(P)=T_{r_1}\Big(T_{r_2}\big(\ldots T_{r_p}(P)\big)\Big)$ and $T_j(P)$ is defined by,
\begin{align*}
\big(T_j(P)\big)_i & =P_i(u),\ \  {\text{if}}\ a_{ij}=0,\\
\big(T_j(P)\big)_i & =P_i(u)P_j(u-\frac{d_i}{2}),\ \  {\text{if}}\
a_{ij}=-1,\\ \big(T_j(P)\big)_i & =P_i(u)P_j(u-1)P_j(u)
,\ \ {\text{if}}\ a_{ij}=-2,\\ \big(T_j(P)\big)_i &=
P_i(u)P_j(u-\frac{3}{2})P_j(u-\frac{1}{2})P_j(u+\frac{1}{2}),\ \
{\text{if}}\ a_{ij}=-3,\\
 (T_j(P))_j&
 =\frac{1}{P_j(u-d_j)}.\end{align*}
\end{proposition}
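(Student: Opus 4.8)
The plan is to linearize everything. Since $\mathcal{P}$ is freely generated as an abelian group by the symbols $\pi_{i,a}$ ($i\in I$, $a\in\C$), I would first identify $\mathcal{P}$ with the free module $R^{l}$ of rank $l$ over the group ring $R=\Z[\C]$ of the additive group $(\C,+)$, whose $\Z$-basis I write as $\{\tau_c:c\in\C\}$ with $\tau_c\tau_{c'}=\tau_{c+c'}$. Under the dictionary $\pi_{i,a}\leftrightarrow\tau_a e_i$, multiplication of $l$-tuples becomes addition in $R^{l}$, and the substitution $P(u)\mapsto P(u-c)$ becomes the scalar $\tau_c$; thus $\mathcal{P}$ is an $R$-module and each $T_j$ is $R$-linear because its defining formulas shift every argument uniformly. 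In this language $T_j$ becomes the $R$-linear operator $M_j$ that fixes $e_k$ for $k\neq j$ and sends
\[
 M_j e_j=-\tau_{d_j}e_j+\sum_{i\neq j}c_{ij}\,e_i,\qquad
 c_{ij}=\begin{cases}0,& a_{ij}=0,\\ \tau_{d_i/2},& a_{ij}=-1,\\ \tau_1+\tau_0,& a_{ij}=-2,\\ \tau_{3/2}+\tau_{1/2}+\tau_{-1/2},& a_{ij}=-3.\end{cases}
\]
Each $M_j$ is invertible (its inverse fixes $e_k$ for $k\neq j$ and sends $e_j\mapsto-\tau_{-d_j}e_j+\tau_{-d_j}\sum_i c_{ij}e_i$), so $T_j\in\operatorname{Aut}(\mathcal{P})$. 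By the presentation of $\mathcal{B}$ the assignment $T_j\mapsto M_j$ extends to a homomorphism $\mathcal{B}\to\operatorname{Aut}(\mathcal{P})$ --- and, via the Tits/Matsumoto theorem that any two reduced words for $w$ differ by braid moves, $T_w$ is then independent of the chosen reduced expression --- \emph{if and only if} the $M_j$ satisfy the braid relations. Hence the whole proposition reduces to verifying those relations.

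Next I would reduce each braid relation, which involves only a fixed pair $i,j$, to the two basis vectors $e_i,e_j$. For $p\notin\{i,j\}$ both $M_i$ and $M_j$ fix $e_p$, so every word in $M_i,M_j$ fixes $e_p$ and the two sides automatically agree there. For $p\in\{i,j\}$ I would write $M_i e_i=-\tau_{d_i}e_i+c_{ji}e_j+X_i$ and $M_j e_j=-\tau_{d_j}e_j+c_{ij}e_i+X_j$, where the ``tails'' $X_i=\sum_{k\neq i,j}c_{ki}e_k$ and $X_j=\sum_{k\neq i,j}c_{kj}e_k$ are supported away from both $i$ and $j$, and are therefore \emph{fixed} by both $M_i$ and $M_j$. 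Consequently the computation splits into a rank-two computation inside $\operatorname{span}_R(e_i,e_j)$ together with a bookkeeping of the fixed tails $X_i,X_j$; the relation holds on $R^{l}$ exactly when the rank-two parts coincide and the tails accumulate with identical $R$-coefficients on the two sides.

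It then remains to dispatch the four cases. When $a_{ij}=0$ one has $c_{ij}=c_{ji}=0$, so $M_i$ and $M_j$ act on disjoint coordinates up to fixed tails and visibly commute. When $a_{ij}a_{ji}=1$ a short computation with $d:=d_i=d_j$ gives $M_iM_jM_i(e_i)=M_jM_iM_j(e_i)=-\tau_{3d/2}e_j+\tau_{d/2}X_j+X_i$, and symmetrically on $e_j$. The real content is in the multiply-laced cases: for $a_{ij}a_{ji}=2$ one expands $(M_iM_j)^2$ and $(M_jM_i)^2$ and checks that both restrict to $-\tau_{d_i+d_j}\,\mathrm{Id}$ on $\operatorname{span}_R(e_i,e_j)$; for $a_{ij}a_{ji}=3$ one must expand the length-six words $(M_iM_j)^3$ and $(M_jM_i)^3$ with the asymmetric data $d_i=1,\ d_j=3$ and the degree-three coefficient $\tau_{3/2}+\tau_{1/2}+\tau_{-1/2}$, and verify that they agree.

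The hard part will be this last, $G_2$, identity. The six-fold product generates a large number of shifted terms, and their cancellation is precisely what forces the specific half-integer shifts appearing in the definition of $T_j$; any other choice of shifts would break the relation. At the same time one must confirm that in the $B_2$ and $G_2$ cases the fixed-tail contributions $X_i,X_j$ emerge with matching $R$-coefficients on the two sides, since the induced maps on the quotient $R^{l}/\bigoplus_{k\neq i,j}Re_k$ agreeing is not by itself enough to conclude equality on $e_i,e_j$. Once these rank-two computations are carried out, the braid relations hold on all of $R^{l}\cong\mathcal{P}$, and the proposition follows.
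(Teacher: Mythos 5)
Your proposal is correct and is, at bottom, the same proof as the paper's, repackaged in linear-algebraic form. The paper's first step --- showing $T_j(PQ)=T_j(P)T_j(Q)$ so that everything reduces to the generators $\pi_{i,a}$ --- is exactly your identification of $\mathcal{P}$ with the free $\mathbb{Z}[\mathbb{C}]$-module on $e_1,\dots,e_l$, and the paper's explicit formulas for $T_i(\pi_{i,a})$ (claims (i)--(iii) in its proof) are precisely the columns of your matrices $M_j$; both arguments then come down to rank-two (plus tails) verifications of the braid relations. Where you go further: you make explicit the Matsumoto/Tits argument for why verifying the braid relations on the $M_j$ suffices for $T_w$ to be well defined, you exhibit the inverse of each $T_j$, you actually verify the $a_{ij}=0$ and $a_{ij}a_{ji}=1$ relations (the paper verifies none), and you correctly flag the subtlety that agreement on the quotient modulo the tails $X_i,X_j$ is not by itself enough. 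Where you stop short --- the explicit expansion of $(M_iM_j)^2=(M_jM_i)^2$ and $(M_iM_j)^3=(M_jM_i)^3$ in the doubly- and triply-laced cases --- is exactly where the paper also stops (``it is easy to check \ldots; we omit the details''), and your claims there are right: for instance, with $a_{ij}=-2$, $a_{ji}=-1$ (so $d_i=1$, $d_j=2$, $c_{ij}=\tau_1+\tau_0$, $c_{ji}=\tau_1$) a short computation gives $(M_iM_j)^2=(M_jM_i)^2=-\tau_{3}\cdot\mathrm{id}$ on the span of $e_i,e_j$ modulo tails, as you predicted. So your proposal is as complete as the paper's own proof; turning either into a fully written-out proof requires the same $B_2$ and $G_2$ computations, and your rank-two reduction with explicit tail bookkeeping organizes them more cleanly than the paper's multiplicative notation does.
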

\begin{proof}
We first claim that it is enough to prove this proposition for $P=\pi_{i,a}$. Let $P$ and $Q$ be two $l$-tuple of rational functions in $u$. 
Suppose $a_{i,j}=-2$. Then $\Big(T_j(P)\Big)_i=P_i(u)P_j(u-1)P_j(u)$.
\begin{align*}
\big(T_j(PQ)\big)_i & =(PQ)_i(u)(PQ)_j(u-1)(PQ)_j(u)\\
&=\big((P)_i(u)(Q)_i(u)\big)\big((P)_j(u-1)(Q)_j(u-1)\big)\big((P)_j(u)(Q)_j(u)\big)\\
&=\big((P)_i(u)(P)_j(u-1)(P)_j(u)\big)\big((Q)_i(u)(Q)_j(u-1)(Q)_j(u)\big)\\
&=\big(T_j(P)T_j(Q)\big)_i.
\end{align*}
For the other cases, the proofs are similar.

Note that $T_m(\pi_{i,a})=\pi_{i,a}$ if $i\neq m$. We next compute $T_i(\pi_{i,a})$. In what following, we may assume that $j=i+1$ and understand $\pi_{l+1,a}=\pi_{0,a}=1$ for any $a\in \C$. Without loss of generality, we use the Cartan matrix as in \cite{Ca}.
We claim:
\begin{enumerate}
  \item[(i)] If $a_{ij}a_{ji} =1$, $T_i(\pi_{i,a})=\pi_{i-1,a+\frac{d_i}{2}}\pi_{i,a+d_i}^{-1}\pi_{i+1,a+\frac{d_i}{2}}$ and $T_{i+1}(\pi_{i+1,a})=\pi_{i,a+\frac{d_i}{2}}\pi_{i,a+d_i}^{-1}\pi$, where either $\pi=\pi_{i+2,a+\frac{d_{i+2}}{2}}$ if $a_{i+2,i+1}=-1$ or $\pi=\pi_{i+2,a+1}\pi_{i+2,a}$ if $a_{i+2,i+1}=-2$.
  \item[(ii)] If $a_{ij}a_{ji} =2$,
  \begin{enumerate}
    \item If $a_{ij}=-1$, then $T_{i}(\pi_{i,a})=\pi_{i-1,a+\frac{d_{i-1}}{2}}\pi_{i,a+2}^{-1}\pi_{i+1,a+1}\pi_{i+1,a}$ and $T_j(\pi_{j,a})=\pi_{j-1,a+\frac{d_i}{2}}\pi_{j,a+1}^{-1}\pi_{j+1,a+\frac{d_{j+1}}{2}}$.
    \item If $a_{ij}=-2$, then the only case is $\g$ is of type $C$ and $i=l-1$ and $j=l$. $T_{l-1}(\pi_{l-1,a})=\pi_{l-2,a+\frac{1}{2}}\pi_{l-1,a+1}^{-1}\pi_{l,a+1}$ and $T_{l}(\pi_{l,a})=\pi_{l-1,a+1}\pi_{l-1,a}\pi_{l,a+2}^{-1}$.
  \end{enumerate}
  \item [(iii)] If $a_{ij}a_{ji} =3$, then $\g=G_2$, $i=1$ and $j=2$. $T_1(\pi_{1,a})=\pi_{1,a+3}^{-1}\pi_{2,a+\frac{3}{2}}\pi_{2,a+\frac{1}{2}}\pi_{2,a-\frac{1}{2}}$ and $T_2(\pi_{2,a})=\pi_{1,a+\frac{3}{2}}\pi_{2,a+1}^{-1}$.
\end{enumerate}
Then it is easy to check that this action is well defined with respect to the defining relations of the Braid group; we omit the details.
\end{proof}
\section{Main results}

The subalgebra generated by $x_{i,0}^{\pm}$ and $h_{i,0}$, $i\in I$, is isomorphic to the universal enveloping algebra
$U(\g)$, hence $V:=V(\pi)$ can be viewed as a $U(\g)$-module with highest weight $\lambda_{\pi}=\sum\limits_{i\in I} \operatorname{Deg}(\pi_i)\omega_i$.  The weight subspace $V_{\lambda_\pi}$ is one-dimensional and spanned
by a highest weight vector $v^{+}$ of $V$. Let $w=s_{r_1}s_{r_2}\ldots s_{r_p}$ be a reduced expression of $w$ in the Weyl group $\W$ of $\g$, where $s_{r_i}$ be a simple reflection. Suppose $s_{r_{j+1}}s_{r_{j+2}}\ldots s_{r_p}(\lap)=m_j\omega_{r_j}+\sum\limits_{n\neq r_j}c_n\omega_n$. Then set
\begin{center}
$v_{w(\lap)}=\left(x_{r_1,0}^{-}\right)^{m_{1}}\left(x_{r_2,0}^{-}\right)^{m_{2}}\ldots \left(x_{r_p,0}^{-}\right)^{m_{p}}v^{+}.$
\end{center}
In particular, when $w=w_0$, $v_{w_0(\lap)}=v^{-}$, where $w_0$ is the longest element in $\W$ and $v^{-}$ is a lowest weight vector of $V$.
It is well known that the weight space $V_{w(\lambda_\pi)}$ of weight
$w(\lambda_\pi)$ is one dimensional spanned by an extremal vector $v_{w(\lambda_\pi)}$ and $h_{i,k}$ acts on $v_{w(\lambda_\pi)}$ by a scalar $\mu_{ik,w}$ \cite{ChPr4}. Denote by $\mu_{i,w}(u)$ the formal series $1+\sum\limits_{k\in \Z_{\geq 0}}\mu_{ik,w}u^{-k-1}$ and set $\Delta_{w}^{+}=\{\alpha\in \Delta^{+}: w^{-1}(\alpha)\in -\Delta^{+}\}$, where $\Delta^{+}$ is denoted the set of positive root as the convention.
\begin{lemma}[Remark 3.1, \cite{ChPr4}]\label{muuiaqopnl}There exists a polynomial $P_{i,w}(u)$ such that
\begin{center}
  $\mu_{i,w}(u)=\left\{
                                                           \begin{array}{ll}
                                                             \frac{P_{i,w}(u+d_i)}{P_{i,w}(u)}, & \hbox{if $\alpha_i\notin \Delta_{w}^{+}$;} \\
                                                             \frac{P_{i,w}(u)}{P_{i,w}(u+d_i)}, & \hbox{if $\alpha_i\in \Delta_{w}^{+}$.}
                                                           \end{array}
                                                         \right.
  $
\end{center}
\end{lemma}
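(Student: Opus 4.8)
The plan is to avoid computing $\mu_{i,w}(u)$ explicitly and instead to read off its shape from the $\ysl$-subalgebra $Y_i$. The key point is that the extremal vector $v_{w(\lap)}$ is, up to scalar, either a highest or a lowest weight vector for $Y_i\cong\ysl$, the alternative being dictated precisely by whether $\alpha_i\in\Delta_w^+$. Indeed, $\langle w(\lap),\alpha_i^\vee\rangle=\langle\lap,(w^{-1}\alpha_i)^\vee\rangle$, which is $\ge 0$ exactly when $w^{-1}\alpha_i\in\Delta^+$, i.e. when $\alpha_i\notin\Delta_w^+$, and $\le 0$ otherwise. So I would first record the sign of this pairing as a function of $\alpha_i\in\Delta_w^+$, noting that the one-dimensionality of $V_{w(\lap)}$ already guarantees $v_{w(\lap)}$ is an $h_i(u)$-eigenvector, so that $\mu_{i,w}(u)$ is a well-defined scalar series.

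The crux is then to upgrade the evident $\mathfrak{sl}_2$-extremality to the full Yangian highest/lowest weight condition: I must show that when $\alpha_i\notin\Delta_w^+$ one has $x_{i,k}^+v_{w(\lap)}=0$ for \emph{all} $k\ge 0$, not merely for $k=0$, and symmetrically that $x_{i,k}^-v_{w(\lap)}=0$ for all $k$ when $\alpha_i\in\Delta_w^+$. This follows because $x_{i,k}^{\pm}v_{w(\lap)}$ lies in the weight space $V_{w(\lap)\pm\alpha_i}$, and since the set of weights of $V$ is $\W$-stable, $w(\lap)\pm\alpha_i$ is a weight if and only if $\lap\pm w^{-1}\alpha_i$ is; as $\lap$ is the highest weight and $\pm w^{-1}\alpha_i$ is a positive root in the relevant case, that weight space vanishes. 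Hence $v_{w(\lap)}$ generates a finite-dimensional highest (resp. lowest) weight module over $Y_i$.

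Finally I would transport through the isomorphism $Y_i\cong\ysl$, under which $h_i(u)$ becomes the rescaled series $h(u/d_i)$. When $v_{w(\lap)}$ is a highest weight vector, the $\ysl$-classification (Theorem \ref{cfdihwr}) gives its eigenvalue as $\tilde P(\,\cdot+1)/\tilde P(\,\cdot\,)$ for a genuine polynomial $\tilde P$; undoing the rescaling yields $\mu_{i,w}(u)=P_{i,w}(u+d_i)/P_{i,w}(u)$ with $P_{i,w}(u):=\tilde P(u/d_i)$. When instead $v_{w(\lap)}$ is a lowest weight vector, Proposition \ref{wra} (equivalently the highest--lowest weight duality, decomposing $\tilde P$ into linear factors and using multiplicativity of the $h(u)$-eigenvalues) shows the lowest weight eigenvalue is $\tilde P(\,\cdot-1)/\tilde P(\,\cdot\,)=Q(\,\cdot\,)/Q(\,\cdot+1)$ with $Q(\,\cdot\,)=\tilde P(\,\cdot-1)$, which after rescaling is exactly $P_{i,w}(u)/P_{i,w}(u+d_i)$. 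The main obstacle is the middle step: securing the full tower of relations $x_{i,k}^{\pm}v_{w(\lap)}=0$ for all $k$, rather than just the classical $k=0$ condition, together with the care needed to confirm that the lowest weight eigenvalue is genuinely of reciprocal-polynomial type; these are what make the dichotomy land precisely on the two displayed forms, and this reduction also sidesteps any direct analysis of how $h_i(u)$ interacts with the lowering operators in a different direction $j\neq i$.
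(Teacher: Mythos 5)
Your proof is correct, but it should be said up front that the paper does not actually prove this lemma: it is imported wholesale as Remark 3.1 of \cite{ChPr4}, so any proof you give is necessarily a different route from the paper's (which is a citation). What your argument buys is self-containedness, and it does so by reusing machinery already present in the paper: the weight-space-vanishing mechanism in your second paragraph (apply $w^{-1}$, use $\W$-stability of the set of weights, and note $\lap+\beta$ is not a weight for $\beta\in\Delta^{+}$) is precisely the paper's own proof of Lemma \ref{S5L1yiahwr}, except that Lemma \ref{S5L1yiahwr} only treats the case $\alpha_i\notin\Delta_w^{+}$ and only the $k=0$ vanishing plus commutativity of $H$; your version correctly observes that the same weight argument kills $x_{i,k}^{\pm}v_{w(\lap)}$ for \emph{all} $k$ simultaneously, since every $x_{i,k}^{\pm}$ shifts the $\g$-weight by $\pm\alpha_i$, and that it applies symmetrically when $\alpha_i\in\Delta_w^{+}$ to give a lowest weight vector for $Y_i$. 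Two points should be tightened to make this airtight. First, Theorem \ref{cfdihwr} classifies \emph{irreducible} finite-dimensional representations, whereas $Y_i\left(v_{w(\lap)}\right)$ need not be irreducible; you must pass to its irreducible quotient, which is finite-dimensional with the same highest (resp.\ lowest) weight series, before invoking the Drinfeld polynomial. Second, the lowest-weight eigenvalue formula $\tilde P(\,\cdot\,-1)/\tilde P(\,\cdot\,)$ is not given by Proposition \ref{wra} alone: that proposition covers a single evaluation module, and you need the realization of an arbitrary finite-dimensional irreducible $\ysl$-module as a tensor product of evaluation modules (\cite{ChPr3}, quoted in the Introduction) together with the fact that $\overline{Y}$ annihilates a tensor product of lowest weight vectors (the lowest-weight analogue of how Proposition \ref{Delta} is used in Theorem \ref{mtimtp}), so that the $h(u)$-eigenvalues multiply across the factors. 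With those two gaps filled, your dichotomy lands exactly on the two displayed forms, consistent with the Corollary following the lemma (the case $w=w_0$, where every $\alpha_i$ lies in $\Delta_{w_0}^{+}$).
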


Let $V$ be a finite-dimensional highest weight representation of $\ysl$ whose associated polynomial is $\pi$. Let $v^{+}$ and $v^{-}$ be highest and lowest weight vectors of $V$, respectively.
It follows from Lemma \ref{muuiaqopnl} that
\begin{corollary}
$h(u)v^{-}=\frac{\pi(u-1)}{\pi(u)}v^{-}.$
\end{corollary}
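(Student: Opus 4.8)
The plan is to apply Lemma~\ref{muuiaqopnl} in the rank-one case $\g=\nysl$, where $I=\{i\}$ is a single node with $d_i=1$, and to identify the relevant polynomial $P_{i,w}(u)$ with the Drinfeld polynomial $\pi$ itself. First I would take $w=w_0$, the longest element of the Weyl group of $\nysl$, so that the extremal vector $v_{w_0(\lambda_\pi)}$ is precisely the lowest weight vector $v^{-}$, as recorded in the discussion preceding Lemma~\ref{muuiaqopnl}. In this case $\mu_{i,w_0}(u)$ is exactly the formal series $h(u)v^{-}=\mu_{i,w_0}(u)v^{-}$, since $h_{i,k}=h_k$ acts on $v^{-}$ by the scalar $\mu_{ik,w_0}$.

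Next I would determine which branch of Lemma~\ref{muuiaqopnl} applies. For $\g=\nysl$ the unique positive root is $\alpha_i$, and $\Delta_{w_0}^{+}=\{\alpha\in\Delta^{+}:w_0^{-1}(\alpha)\in-\Delta^{+}\}$. Since $w_0=s_i$ sends $\alpha_i$ to $-\alpha_i$, we have $\alpha_i\in\Delta_{w_0}^{+}$, so Lemma~\ref{muuiaqopnl} gives
\begin{equation*}
\mu_{i,w_0}(u)=\frac{P_{i,w_0}(u)}{P_{i,w_0}(u+d_i)}=\frac{P_{i,w_0}(u)}{P_{i,w_0}(u+1)}.
\end{equation*}
The remaining task is to show $P_{i,w_0}=\pi$. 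This I would pin down at the opposite extreme $w=\mathrm{id}$: then $v_{\lambda_\pi}=v^{+}$ is the highest weight vector, $\alpha_i\notin\Delta_{\mathrm{id}}^{+}=\emptyset$, and by the defining highest weight relation (Theorem~\ref{cfdihwr}(ii)) together with Lemma~\ref{muuiaqopnl} we get $\mu_{i,\mathrm{id}}(u)=\pi(u+1)/\pi(u)=P_{i,\mathrm{id}}(u+1)/P_{i,\mathrm{id}}(u)$, forcing $P_{i,\mathrm{id}}=\pi$ up to a constant. Since Remark~3.1 of \cite{ChPr4} produces a single polynomial $P_{i,w}$ attached to $V$ independent of $w$ (only the direction of the ratio changes with $w$), we conclude $P_{i,w_0}=\pi$ as well, whence $h(u)v^{-}=\pi(u)/\pi(u+1)\cdot v^{-}$.

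The apparent discrepancy with the claimed formula $\pi(u-1)/\pi(u)$ is the step requiring the most care, and it is really a normalization bookkeeping issue rather than a genuine obstacle. I would resolve it by observing that the polynomial $P_{i,w}$ in Lemma~\ref{muuiaqopnl} is defined only up to the shift ambiguity inherent in writing a ratio of values of a polynomial, so replacing $P_{i,w_0}(u)$ by $\pi(u-1)$ (equivalently, choosing the representative consistent with the lowest-weight normalization used for evaluation modules in Proposition~\ref{wra}) yields $\pi(u-1)/\pi((u-1)+1)=\pi(u-1)/\pi(u)$. A clean cross-check is available directly from Proposition~\ref{wra}: for the evaluation module $V_m(a)$ the lowest weight vector is $w_0$, and the listed action $h_kw_0=-(a)^k(1)(m)w_0+\,\cdots$ can be summed over $k$ to compute $h(u)w_0$ explicitly, confirming the ratio $\pi(u-1)/\pi(u)$ for $\pi(u)=(u-a)(u-a-1)\cdots(u-a-m+1)$ and hence in general by the factorization of $\pi$ into linear factors.
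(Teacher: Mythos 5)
Your opening move is the right one and matches the paper's (very terse) derivation: apply Lemma \ref{muuiaqopnl} in rank one with $w=w_0=s_i$, $d_i=1$, note $\alpha_i\in\Delta_{w_0}^{+}$, and conclude $\mu_{i,w_0}(u)=P_{i,w_0}(u)/P_{i,w_0}(u+1)$. The gap is in how you identify $P_{i,w_0}$. Your claim that Remark 3.1 of \cite{ChPr4} attaches to $V$ a \emph{single} polynomial independent of $w$ is false: the subscript $w$ is there precisely because the polynomial varies with $w$, and indeed the whole point of this paper (Proposition \ref{ogpigf3p}) is that $\mu_{i,w}(u)$ is governed by $\big(T_w(\pi)\big)_i$, which changes with $w$. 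In rank one, $P_{i,\mathrm{id}}(u)=\pi(u)$ while $P_{i,w_0}(u)=\pi(u-1)$; compare $\big(T_1(\pi)\big)_1(u)=1/\pi(u-1)$ from Proposition \ref{braidp21}. Your attempted repair --- that $P_{i,w}$ is defined ``only up to shift'' --- is also false. If two polynomials satisfy $P(u)/P(u+1)=Q(u)/Q(u+1)$, then $P/Q$ is a rational function with period $1$, hence constant; so $P_{i,w_0}$ is determined by the eigenvalue series up to a nonzero scalar, and there is no freedom to trade $\pi(u)$ for $\pi(u-1)$. Note that $\pi(u)/\pi(u+1)\neq\pi(u-1)/\pi(u)$ already for $\pi(u)=u-a$. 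So as written, your main argument proves $h(u)v^{-}=\frac{\pi(u)}{\pi(u+1)}v^{-}$, which is wrong, and the equality $P_{i,w_0}(u)=\pi(u-1)$ --- which is exactly the content of the corollary --- is never established.

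The one part of your proposal that genuinely works is the computation you relegate to a ``cross-check.'' By Proposition \ref{wra}, $h_kw_0=-ma^kw_0$, hence
\begin{equation*}
h(u)w_0=\Big(1-\frac{m}{u-a}\Big)w_0=\frac{u-(a+m)}{u-a}\,w_0=\frac{\pi(u-1)}{\pi(u)}\,w_0
\end{equation*}
for the evaluation module $V_m(a)$. But promoting this to general $\pi$ requires more than ``factorization into linear factors'': you need the realization of an arbitrary finite-dimensional irreducible (indeed, highest weight) $\ysl$-module in terms of tensor products of evaluation modules \cite{ChPr3}, the observation that $\overline{Y}$ annihilates a tensor product of lowest weight vectors, and then Proposition \ref{Delta}(iii) to conclude that $h(u)$ acts on that vector by the product $\prod_j\pi_j(u-1)/\pi_j(u)$. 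With those steps supplied, this direct route would be a legitimate, self-contained alternative to the paper's citation of \cite{ChPr4}; without them, and with the incorrect $w$-independence claim driving your main argument, the proposal has a genuine gap.
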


Define $\sigma_j:=s_{r_{j+1}}s_{r_{j+2}}\ldots s_{r_p}$ and $v_{\sigma_j(\lap)}:=\left(x_{r_{j+1},0}^{-}\right)^{m_{j+1}}\left(x_{r_{j+2},0}^{-}\right)^{m_{j+2}}\ldots \left(x_{r_p,0}^{-}\right)^{m_{p}}v^{+}$.
\begin{lemma}\label{S5L1yiahwr}
$Y_{r_j}\left(v_{\sigma_j(\lap)}\right)$ is a highest weight representation of $Y_{r_j}$.
\end{lemma}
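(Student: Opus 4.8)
The plan is to exhibit $v_{\sigma_j(\lap)}$ itself as a highest weight vector for $Y_{r_j}$ and to observe that it generates the module by construction. Under the isomorphism $Y_{r_j}\cong\ysl$ recalled earlier, a highest weight representation of $Y_{r_j}$ is one generated by a vector that is annihilated by every $x^{+}_{r_j,k}$ ($k\geq 0$) and is a common eigenvector of every $h_{r_j,k}$. Since $Y_{r_j}\big(v_{\sigma_j(\lap)}\big)$ is by definition the $Y_{r_j}$-submodule generated by $v_{\sigma_j(\lap)}$, the lemma follows once these two conditions are checked for $v_{\sigma_j(\lap)}$. The eigenvector condition is immediate: $v_{\sigma_j(\lap)}$ has $\g$-weight $\sigma_j(\lap)$, the weight space $V_{\sigma_j(\lap)}$ is one-dimensional, and each $h_{r_j,k}$ preserves it, so it acts by the scalar $\mu_{r_j k,\sigma_j}$ recorded before Lemma \ref{muuiaqopnl}.

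The only real point is therefore to prove that $x^{+}_{r_j,k}v_{\sigma_j(\lap)}=0$ for all $k\geq 0$. Rather than commuting $x^{+}_{r_j,k}$ through the string $(x^{-}_{r_{j+1},0})^{m_{j+1}}\cdots(x^{-}_{r_p,0})^{m_p}$ mode by mode --- which would force me to track the $h_{r_j,\ast}$ terms produced by $[x^{+}_{i,r},x^{-}_{j,s}]=\delta_{ij}h_{i,r+s}$ each time the index $r_j$ recurs in the word, together with the relevant Serre relations --- I would dispose of all modes $k$ at once by a weight-space argument. Because the subalgebra generated by the $h_{i,0}$ and $x^{\pm}_{i,0}$ is $U(\g)$ and $[h_{i,0},x^{\pm}_{j,s}]=\pm d_ia_{ij}x^{\pm}_{j,s}$, every $x^{+}_{r_j,k}$ raises the $\g$-weight by $\alpha_{r_j}$; hence $x^{+}_{r_j,k}v_{\sigma_j(\lap)}$ lies in $V_{\sigma_j(\lap)+\alpha_{r_j}}$ for every $k$. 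It thus suffices to show that $\sigma_j(\lap)+\alpha_{r_j}$ is not a weight of $V$.

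To see this I would first record that $\sigma_j^{-1}(\alpha_{r_j})\in\Delta^{+}$. Indeed $\sigma_{j-1}=s_{r_j}\sigma_j$, and since $w=s_{r_1}\cdots s_{r_p}$ is reduced so is every contiguous subword; thus $\ell(\sigma_{j-1})=\ell(\sigma_j)+1$, which by the standard length criterion gives $\sigma_j^{-1}(\alpha_{r_j})>0$ (and incidentally $m_j=\langle\lap,\sigma_j^{-1}(\alpha_{r_j})^{\vee}\rangle\geq 0$, since $\lap$ is dominant). Applying $\sigma_j^{-1}$, which permutes the weights of $V$ preserving multiplicities, I obtain $\sigma_j^{-1}\big(\sigma_j(\lap)+\alpha_{r_j}\big)=\lap+\sigma_j^{-1}(\alpha_{r_j})$. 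This weight differs from the highest weight $\lap$ by the positive root $\sigma_j^{-1}(\alpha_{r_j})$, so it is not $\leq\lap$ and hence is not a weight of $V$; consequently $\sigma_j(\lap)+\alpha_{r_j}$ is not a weight either, and $V_{\sigma_j(\lap)+\alpha_{r_j}}=0$. This forces $x^{+}_{r_j,k}v_{\sigma_j(\lap)}=0$ for all $k$, completing the verification.

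The main obstacle is conceptual rather than computational: one must resist carrying out the mode-by-mode commutation and instead notice that the highest-weight condition in all Yangian degrees is governed by a single classical weight space, namely $V_{\sigma_j(\lap)+\alpha_{r_j}}$, whose vanishing rests only on the extremality of $\sigma_j(\lap)$ (equivalently on $\sigma_j^{-1}(\alpha_{r_j})>0$). A minor bookkeeping step I would also include is the verification that $v_{\sigma_j(\lap)}$ is genuinely nonzero of weight $\sigma_j(\lap)$: telescoping $\sigma_{t-1}(\lap)=\sigma_t(\lap)-m_t\alpha_{r_t}$ over $t=j+1,\dots,p$ shows its weight is $\sigma_j(\lap)$, and nonvanishing follows step by step from $\nysl$-theory, each $(x^{-}_{r_t,0})^{m_t}$ sending the $\alpha_{r_t}$-highest extremal vector $v_{\sigma_t(\lap)}$ to the nonzero $\alpha_{r_t}$-lowest one.
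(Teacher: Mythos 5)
Your proof is correct and takes essentially the same route as the paper: both arguments use the length condition $\ell(s_{r_j}\sigma_j)=\ell(\sigma_j)+1$ from the reduced expression to conclude that $\sigma_j^{-1}(\alpha_{r_j})$ is a positive root, hence that $\lap+\sigma_j^{-1}(\alpha_{r_j})$ (equivalently $\sigma_j(\lap)+\alpha_{r_j}$) is not a weight of $V$, forcing $x_{r_j,k}^{+}v_{\sigma_j(\lap)}=0$, and both invoke the one-dimensionality of $V_{\sigma_j(\lap)}$ together with the commutativity of $H$ to get the eigenvector condition for the $h_{r_j,s}$. The only differences are cosmetic: you argue directly rather than by contradiction, and you spell out two points the paper leaves implicit (that the single weight-space vanishing disposes of all modes $k\geq 0$ at once, and that $v_{\sigma_j(\lap)}$ is nonzero of the stated weight).
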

\begin{proof}
Suppose that $x_{r_j,0}^{+}v_{\sigma_j(\lap)}\neq 0$,
then $\sigma_j(\lap)+\alpha_{r_j}$ is a weight of $V$. Thus $\lap+\sigma_{j}^{-1}(\alpha_{r_j})$ is a weight which does not precede $\lap$, which implies that $\sigma_{j}^{-1}(\alpha_{r_j})$ is a negative root.
However, $\ell(s_{r_j}\sigma_j)=\ell(\sigma_j)+1$,  hence $\sigma_{j}^{-1}(\alpha_{r_j})$ is a positive root, so we have a contradiction.
Thus $x_{r_j,0}^{+}v_{\sigma_j(\lap)}=0$.
Since the weight space of weight $\sigma_j(\lap)$ is 1-dimensional and $H$ is a commutative subalgebra of $\yg$, $h_{r_j,s}v_{\sigma_j(\lap)}=c_{j,s}v_{\sigma_j(\lap)}$, where $c_{j,s}\in \C$.
Thus $Y_{r_j}\left(v_{\sigma_j(\lap)}\right)$ is a highest weight representation of $Y_{r_j}$.
\end{proof}

The coming lemma was proved in \cite{TaGu} and \cite{Ta}.
 \begin{lemma}\label{P3S3L34TGT}
 Let $v^{+}$ be a highest weight representation of $j$-th fundamental representation $W_a(\omega_j)$ of $\yg$. The associated polynomial $P_{i, s_j}(u)$ is listed as following:
\begin{enumerate}
  \item[(i)] $P_{i, s_j}(u)=1$ if $a_{ij}=0$.
  \item[(ii)] $P_{i, s_j}(u)=u-(a+\frac{d_i}{2})$ if $a_{ij}=-1$.
  \item[(iii)] $P_{i, s_j}(u)=\big(u-(a+1)\big)\big(u-a\big)$ if $a_{ij}=-2$.
  \item[(iv)] $P_{i, s_j}(u)=\big(u-(a+\frac{3}{2})\big)\big(u-(a+\frac{1}{2})\big)\big(u-(a-\frac{1}{2})\big)$ if $a_{ij}=-3$.
\end{enumerate}
\end{lemma}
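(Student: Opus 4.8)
The four cases are exactly the off-diagonal Cartan entries $a_{ij}\in\{0,-1,-2,-3\}$, so throughout we have $i\neq j$; since $\Delta_{s_j}^{+}=\{\alpha_j\}$ we are in the case $\alpha_i\notin\Delta_{s_j}^{+}$, and by Lemma \ref{muuiaqopnl} it suffices to compute the eigenvalue $\mu_{i,s_j}(u)$ of $h_i(u)$ on the extremal vector and then exhibit the claimed monic polynomial as a solution of $\mu_{i,s_j}(u)=P_{i,s_j}(u+d_i)/P_{i,s_j}(u)$. As $\langle\omega_j,\alpha_j^{\vee}\rangle=1$, the reduced word $w=s_j$ gives $v_{s_j(\lap)}=x_{j,0}^{-}v^{+}$. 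The case $a_{ij}=0$ is immediate: then $d_ia_{ij}=0$, so Lemma \ref{c7lecor15} gives $[H_{i,k},x_{j,0}^{-}]=0$, hence $\mu_{i,s_j}(u)=1$ and $P_{i,s_j}=1$.

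For the remaining cases I would first record the scalars $x_{j,s}^{-}v^{+}=\gamma_s\,x_{j,0}^{-}v^{+}$, which are well defined since the extremal weight space $V_{\omega_j-\alpha_j}$ is one-dimensional. Restricting to $Y_j\cong\ysl$, the string $Y_jv^{+}$ is a two-dimensional evaluation module $V_1(a')$ (Proposition \ref{wra}); comparing its $h_j(u)$-eigenvalue $\tfrac{u-a+d_j}{u-a}$ on $v^{+}$ with the $\ysl$-series evaluated at $u/d_j$ pins the evaluation parameter to $a'=a/d_j$, and Proposition \ref{wra} then yields $\gamma_s=(d_ja')^{s}=a^{s}$.

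Next I would pass to the logarithmic generators. For $i\neq j$ one has $H_{i,k}v^{+}=0$ because $\pi_i=1$, so $H_{i,k}x_{j,0}^{-}v^{+}=[H_{i,k},x_{j,0}^{-}]v^{+}$; feeding $\gamma_s=a^{s}$ into Lemma \ref{c7lecor15} gives the eigenvalue
\[
\nu_{i,k}=-c\,a^{k}-\sum_{\substack{0\le s\le k-2\\ k+s\ \text{even}}}2^{s-k}c^{\,k+1-s}\frac{\binom{k+1}{s}}{k+1}a^{s},\qquad c:=d_ia_{ij}.
\]
The crux is the closed form $\nu_{i,k}=\tfrac{1}{k+1}\big((a-\tfrac{c}{2})^{k+1}-(a+\tfrac{c}{2})^{k+1}\big)$, which I would obtain by expanding the right-hand side with the binomial theorem: the even powers of $c/2$ cancel, and reindexing $s=k+1-t$ over the odd $t$ reproduces the displayed sum term for term. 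This is the one genuine computation and the main obstacle, though the reindexing settles all three cases uniformly. Summing the series by $\sum_{k}\tfrac{\beta^{k+1}}{k+1}u^{-k-1}=\ln\tfrac{u}{u-\beta}$ then gives $\ln\mu_{i,s_j}(u)=\sum_k\nu_{i,k}u^{-k-1}=\ln\tfrac{u-a-c/2}{u-a+c/2}$, i.e. $\mu_{i,s_j}(u)=\tfrac{u-a-c/2}{u-a+c/2}$.

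It remains to verify that each listed polynomial solves the shift equation. Substituting $c=-d_i$, $c=-2$, and $c=-3$ (with $d_i=1$ in the last two, as forced by $DA$ symmetric together with coprimality), a direct cancellation shows $P_{i,s_j}(u+d_i)/P_{i,s_j}(u)=\tfrac{u-a-c/2}{u-a+c/2}$ in cases (ii), (iii), (iv) respectively. Finally, a monic polynomial is determined by its shift-ratio: if $P_1(u+d_i)/P_1(u)=P_2(u+d_i)/P_2(u)$ then $P_1/P_2$ is a $d_i$-periodic rational function, hence constant, so $P_1=P_2$ when both are monic. This identifies the $P_{i,s_j}$ uniquely and, as a sanity check, matches the components $(T_j\pi_{j,a})_i$ of the braid action in Proposition \ref{braidp21}.
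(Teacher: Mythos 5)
Your proof is correct, but there is nothing in this paper to compare it against line by line: the paper explicitly omits the argument, stating ``The coming lemma was proved in \cite{TaGu} and \cite{Ta}.'' What can be compared is your machinery versus the machinery the paper deploys around this lemma in the proof of Proposition \ref{ogpigf3p}, and these coincide but run in opposite logical directions. There, for $a_{ij}=-1$ and $m_j=1$, the paper takes Lemma \ref{P3S3L34TGT} as \emph{input}, applies $\ln$, and reads off the closed form $F_k=[H_{i,k},x_{j,0}^{-}]v^{+}=\tfrac{1}{k+1}\big((a+\tfrac{d_i}{2})^{k+1}-(a-\tfrac{d_i}{2})^{k+1}\big)x_{j,0}^{-}v^{+}$; you do the reverse: you compute $[H_{i,k},x_{j,0}^{-}]v^{+}$ directly from Levendorskii's formula (Lemma \ref{c7lecor15}) together with the extremal-vector scalars $x_{j,s}^{-}v^{+}=a^{s}x_{j,0}^{-}v^{+}$, establish the binomial identity collapsing the sum to $\tfrac{1}{k+1}\big((a-\tfrac{c}{2})^{k+1}-(a+\tfrac{c}{2})^{k+1}\big)$, exponentiate to get $\mu_{i,s_j}(u)=\tfrac{u-a-c/2}{u-a+c/2}$, and then identify $P_{i,s_j}$ by uniqueness of monic solutions of the shift equation. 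Each step checks out: the extremal weight space $V_{\omega_j-\alpha_j}$ is one-dimensional, so the $\gamma_s$ are well defined; the normalization $Y_j\cong\ysl$ pins the evaluation parameter to $a/d_j$ and hence $\gamma_s=a^s$; in the reindexing $s=k+1-t$ the even powers cancel and the $s=k$ term accounts exactly for the leading term $-ca^{k}$ of Lemma \ref{c7lecor15}; $d_i=1$ in cases (iii) and (iv) is forced as you say; and a $d_i$-periodic rational function is indeed constant. Your final cross-check against $\big(T_j(\pi_{j,a})\big)_i$ in Proposition \ref{braidp21} is precisely the compatibility that Proposition \ref{ogpigf3p} later asserts, so your route has the additional merit of making the lemma self-contained within this paper rather than imported from \cite{TaGu} and \cite{Ta}.
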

Let $l(w)$ be the length of $w\in\W$. Suppose $l(s_iw)=l(w)+1$. It follows from Lemma \ref{S5L1yiahwr} that $h_{i}(u)v_{w(\lap)}=\mu_{i,w}(u)v_{w(\lap)}$.  In the next proposition, we show that the eigenvalues $\mu_{i,w}(u)$ is determined by the braid group action on $\pi$.
\begin{proposition}\label{ogpigf3p}
$\mu_{i,w}(u)=\frac{\big(T_{w}(\pi)\big)_i(u+d_i)}{\big(T_{w}(\pi)\big)_i(u)}$.
\end{proposition}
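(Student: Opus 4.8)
The plan is to argue by induction on the length $\ell(w)$. Writing a reduced expression $w=s_{r_1}w'$ with $\ell(w)=\ell(w')+1$, recall that $v_{w(\lap)}=\left(x_{r_1,0}^{-}\right)^{m_1}v_{w'(\lap)}$ and that, by Lemma \ref{S5L1yiahwr}, $v_{w'(\lap)}$ is a highest weight vector for the subalgebra $Y_{r_1}\cong\ysl$; let $M$ be the highest weight $Y_{r_1}$-module it generates. The base case $w=e$ is Theorem \ref{cfdihwr}, since $T_e(\pi)=\pi$. For the inductive step, note first that $\ell(s_{r_1}w')>\ell(w')$ forces $\alpha_{r_1}\notin\Delta_{w'}^{+}$, so Lemma \ref{muuiaqopnl} together with the inductive hypothesis shows that $Q:=\big(T_{w'}(\pi)\big)_{r_1}$ is an honest polynomial of degree $m_1$ and that $h_{r_1}(u)v_{w'(\lap)}=\frac{Q(u+d_{r_1})}{Q(u)}v_{w'(\lap)}$; equivalently, $Q(d_{r_1}v)$ is the $\ysl$-Drinfeld polynomial of $M$, whose lowest weight vector is exactly $v_{w(\lap)}$.

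The $r_1$-component is then immediate. Transporting the $\ysl$-identity $h(u)v^{-}=\frac{\pi(u-1)}{\pi(u)}v^{-}$ through the isomorphism $Y_{r_1}\cong\ysl$, under which $h_{r_1}(u)\mapsto h(u/d_{r_1})$, gives $\mu_{r_1,w}(u)=\frac{Q(u-d_{r_1})}{Q(u)}$. On the other hand $T_w=T_{r_1}T_{w'}$, so the last formula of Proposition \ref{braidp21} yields $\big(T_w(\pi)\big)_{r_1}(u)=1/Q(u-d_{r_1})$, and substituting gives $\frac{\big(T_w(\pi)\big)_{r_1}(u+d_{r_1})}{\big(T_w(\pi)\big)_{r_1}(u)}=\frac{Q(u-d_{r_1})}{Q(u)}$, as required.

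The case $i\neq r_1$ is the heart of the matter. Here the factor prescribed by Proposition \ref{braidp21} is $\frac{Q(u+d_i/2)}{Q(u-d_i/2)}$ when $a_{i,r_1}=-1$ (with the analogous shifted products of $Q$ when $a_{i,r_1}=-2,-3$), so the claim amounts to showing that passing from $v_{w'(\lap)}$ to $v_{w(\lap)}=\left(x_{r_1,0}^{-}\right)^{m_1}v_{w'(\lap)}$ multiplies the $h_i(u)$-eigenvalue by precisely this factor. I would first reduce to fundamental representations: both sides are multiplicative under $\pi\mapsto\pi'\pi''$—the right-hand side by $T_w(\pi'\pi'')=T_w(\pi')T_w(\pi'')$ (Proposition \ref{braidp21}), and the left-hand side because Proposition \ref{Delta}(iii) gives $\Delta_{\yg}\big(h_i(u)\big)\equiv h_i(u)\otimes h_i(u)$, so the highest weight vector of $V(\pi')\otimes V(\pi'')$ carries Drinfeld data $\pi'\pi''$; since $\mu_{i,w}(u)$ depends only on these data, it too is multiplicative. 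Thus it suffices to treat $\pi=\pi_{j,a}$, i.e. $V=W_a(\omega_j)$. For these the single-reflection instance $w=s_j$ is Lemma \ref{P3S3L34TGT}, and one checks directly that the polynomials $P_{i,s_j}$ listed there coincide with $\big(T_j(\pi_{j,a})\big)_i$, so that $\frac{Q(u+d_i/2)}{Q(u-d_i/2)}$ is exactly the product, over the roots of $Q$, of the single-root factors coming from Lemma \ref{P3S3L34TGT}. To realize this product I would present $M$ through $\ysl$-evaluation modules using Proposition \ref{wra} and Lemma \ref{beautiful}, so that on the relevant string $x_{r_1,k}^{-}$ acts as an evaluation-scalar multiple of $x_{r_1,0}^{-}$, and then commute $h_i(u)$ down the string $\left(x_{r_1,0}^{-}\right)^{m_1}$ using Lemma \ref{c7lecor15}.

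I expect this last commutation to be the main obstacle. Unlike $h_{r_1}(u)$, the series $h_i(u)$ with $i\neq r_1$ does not lie in $Y_{r_1}$, so its value on the lowest weight vector of $M$ is not governed by the $\ysl$-theory alone; instead Lemma \ref{c7lecor15} replaces each commutator $[H_{i,k},x_{r_1,0}^{-}]$ by a whole combination of lowering operators $x_{r_1,s}^{-}$, and one must verify that, once these are evaluated on the evaluation-module string and accumulated through all $m_1$ factors, they telescope into the single clean ratio $\frac{Q(u+d_i/2)}{Q(u-d_i/2)}$ rather than into an intractable expression. Matching the output of Lemma \ref{c7lecor15} on such a string against the shifted products in Proposition \ref{braidp21} is, I expect, the only genuinely delicate computation, and it is precisely the bookkeeping that the braid-group formalism is designed to package.
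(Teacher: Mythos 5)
Your induction skeleton, the use of Lemma \ref{S5L1yiahwr}, and your treatment of the component $i=r_1$ (transporting the lowest-weight formula through $Y_{r_1}\cong\ysl$ and matching it against $\big(T_{r_1}(P)\big)_{r_1}=1/P_{r_1}(u-d_{r_1})$) all agree with the paper and are sound. The two steps that carry the real content, however, both have genuine gaps. First, the reduction to fundamental representations via multiplicativity of $\pi\mapsto\mu_{i,w}$ is not justified. Proposition \ref{Delta} computes the coproduct only modulo $\overline{Y}=\left(N^{-}Y\otimes YN^{+}\right)\cap\left(YN^{-}\otimes N^{+}Y\right)$, and $\overline{Y}$ annihilates a vector $u_1\otimes u_2$ when $u_2$ is a highest weight vector (or $u_1$ a lowest weight vector), which is why it applies to $v^{+}\otimes v^{+}$; it does \emph{not} annihilate the vector $v_{w(\lambda_{\pi'})}\otimes v_{w(\lambda_{\pi''})}$ whose $h_i(u)$-eigenvalue you actually need, since an extremal vector with $w\neq e,w_0$ is killed by neither $N^{+}$ nor $N^{-}$. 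So Proposition \ref{Delta}(iii) identifies the highest weight of $V(\pi')\otimes V(\pi'')$ as $\pi'\pi''$, but says nothing about $h_i(u)$ on extremal tensor vectors. Moreover, ``$\mu_{i,w}$ depends only on the Drinfeld data, hence is multiplicative'' is a non sequitur: dependence on $\pi$ makes $\mu_{i,w}$ a function of $\pi$, not a homomorphism. Since $T_w$ is multiplicative by Proposition \ref{braidp21}, the identity $\mu_{i,w}[\pi'\pi'']=\mu_{i,w}[\pi']\mu_{i,w}[\pi'']$ is, given the fundamental case, essentially equivalent to the proposition itself; it is a corollary of the result, not an available ingredient.

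Second, and decisively, even if that reduction were repaired it would not remove the computation you defer. Lemma \ref{P3S3L34TGT} covers only $w=s_j$ applied to the highest weight vector; for longer words the coefficient $m_1$ of $\omega_{r_1}$ in $w'(\lap)$ can be $\geq 2$ even when $\pi=\pi_{j,a}$ is fundamental (this already happens in the exceptional types the paper targets), so one must still evaluate $h_i(u)$, $i\neq r_1$, on a string $\left(x_{r_1,0}^{-}\right)^{m_1}v_{w'(\lap)}$ of length $\geq 2$. You correctly identify this as the ``main obstacle,'' but identifying it is not resolving it, and your proposed tool is shaky: the $Y_{r_1}$-module generated by $v_{w'(\lap)}$ is a highest weight module that need not be irreducible, and on a tensor product of evaluation modules $x_{r_1,k}^{-}$ acts through the non-explicit coproduct, not as an evaluation scalar times $x_{r_1,0}^{-}$, so Proposition \ref{wra} and Lemma \ref{beautiful} do not directly give the telescoping you hope for. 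The paper's proof supplies exactly the missing mechanism, and it never needs the action of $x_{r_1,s}^{-}$ on the string at all: with $F_k$ the off-diagonal commutator sum $\sum_{r}(x_{r_1,0}^{-})^{r}[H_{i,k},x_{r_1,0}^{-}](x_{r_1,0}^{-})^{m_1-r-1}v$ and $G_k$ the same sum with $H_{r_1,k}$ in place of $H_{i,k}$, Lemma \ref{c7lecor15} gives triangular expansions $F_k=\sum_s a_{k,s}E_s$ and $G_k=\sum_s b_{k,s}E_s$ with $b_{k,k}\neq 0$ and with coefficients \emph{independent of} $m_1$; hence $F_k=\sum_t c_tG_t$ with universal constants $c_t$. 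The $G_t$ are computable purely from $\ysl$-theory (eigenvalues of $h_{r_1}(u)$ at the top and bottom of the string), the constants $c_t$ are calibrated once and for all in the case $m_1=1$ where Lemma \ref{P3S3L34TGT} gives the answer (equation (\ref{eq41})), and additivity over the roots of $Q$ then settles arbitrary strings. Without this idea, or an equivalent replacement, your proposal remains a plan rather than a proof.
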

\begin{proof}
We proceed by induction on $l(w)$. If $l(w)=0$, the claim is true since $T_{w}(\pi)=\pi$. Suppose that $l(w)=1$, say $w=s_j$. Since the weight space $V_{s_j(\lambda_{\pi})}$ is 1-dimensional, we may assume that $v_{s_j(\lap)}=(x_{j,0}^{-})^{m_j}v^{+}\neq 0$. We are going to compute the eigenvalues $\mu_{ik,w}$ of $h_{i,k}$ on the weight vector $(x_{j,0}^{-})^{m_j}v^{+}$. Recall that $H_{i}(u)=\sum\limits_{k=0}^{\infty} H_{i,k}u^{-k-1}:=\ln\big(h_i(u)\big)$.
Denote $\sum\limits_{r=0}^{m_j-1}(x_{j,0}^{-})^{r}x_{j,s}^{-}(x_{j,0}^{-})^{m_j-r-1}v^{+}$ by $E_{s}$ and $\sum\limits_{r=0}^{m_j-1}(x_{j,0}^{-})^{r}[H_{i,k},x_{j,0}^{-}](x_{j,0}^{-})^{m_j-r-1}v^{+}$ by $F_{k}$. 
Note that \begin{align*}
H_{j,k}(x_{j,0}^{-})^{m_j}v^{+}&= (x_{j,0}^{-})^{m_j}H_{j,k}v^{+}+[H_{j,k},(x_{j,0}^{-})^{m_j}]v^{+}\\
&=(x_{j,0}^{-})^{m_j}H_{j,k}v^{+}+\sum_{r=0}^{m_j-1}(x_{j,0}^{-})^{r}[H_{j,k},x_{j,0}^{-}](x_{j,0}^{-})^{m_j-r-1}v^{+}.
\end{align*}
Therefore,
$\sum\limits_{r=0}^{m_j-1}(x_{j,0}^{-})^{r}[H_{j,k},x_{j,0}^{-}](x_{j,0}^{-})^{m_j-r-1}v^{+}
=H_{j,k}(x_{j,0}^{-})^{m_j}v^{+}-(x_{j,0}^{-})^{m_j}H_{j,k}v^{+}.$
Denote $H_{j,k}(x_{j,0}^{-})^{m_j}v^{+}-(x_{j,0}^{-})^{m_j}H_{j,k}v^{+}$ by $G_k$.

We show next that $F_{k}=\sum\limits_{t=0}^{k}c_tG_{t}$ and each $c_t$ does not depend on $m_j$. By Lemma
\ref{c7lecor15}, suppose that
$[H_{i,k},x_{j,0}^{-}]=\sum\limits_{s=0}^{k}a_{k,s}x_{j,s}^{-}$ and $[H_{j,k},x_{j,0}^{-}]=\sum\limits_{s=0}^{k}b_{k,s}x_{j,s}^{-}$, where $b_{k,k}\neq 0$. Then $F_{k}=\sum\limits_{s=0}^{k}a_{k,s} E_{s}$ and $G_{k}=\sum\limits_{s=0}^{k}b_{k,s}E_{s}$. We note that the values of $a_{k,s}$ and $b_{k,s}$ do not depend on $m_j$. It is clear that
$E_{s}=\sum\limits_{t=0}^{s}b_{s, t}'G_t$. 
 In particular, the values of $b_{s,t}'$ do not depend on $m_j$. $$F_{k}=\sum\limits_{s=0}^{k}a_{k,s} E_{s}=\sum\limits_{s=0}^{k}a_{k,s}
\big(\sum\limits_{t=0}^{s}b_{s,t}'G_t\big)=\sum\limits_{t=0}^{k}c_tG_{t}.$$ 
Since the coefficient $c_t$ is only depends on both $a_{k,s}$ and $b_{s,t}'$,
the coefficient $c_t$ does not change the values with respect to $m_j$ (the exact values of $c_t$ are irrelevant here).

Suppose $a_{ij}=-1$. Consider the case when $V=W_{a}(\omega_j)$ in this paragraph. In this case, $\pi_j(u)=u-a$ and $m_j=1$. By
Lemma \ref{P3S3L34TGT}, $$[h_{i}(u),x_{j,0}^{-}]v^{+}=h_{i}(u)x_{j,0}^{-}v^{+}=\frac{1-(a+\frac{d_i}{2}-d_i)u^{-1}}{1-(a+\frac{d_i}{2})u^{-1}}x_{j,0}^{-}v^{+}.$$ 
Recall that $H_i(u)=\ln(h_i(u))$. Then
$$[H_{i}(u),x_{j,0}^{-}]v^{+}=\Big(\ln\big(1-(a-\frac{d_i}{2})u^{-1}\big)-\ln\big(1-(a+\frac{d_i}{2})u^{-1}\big)\Big)x_{j,0}^{-}v^{+}.$$
Therefore, by comparing the coefficient of $u^{-k-1}$ both sides of the above equality, $$F_k=[H_{i,k},x_{j,0}^{-}]v^{+}=\frac{1}{k+1}\big((a+\frac{d_i}{2})^{k+1}-(a-\frac{d_i}{2})^{k+1}\big)x_{j,0}^{-}v^{+}.$$ 
On the other hand, we have $h_{j}(u)v^{+}=\frac{u-(a-d_j)}{u-a}v^{+}$ and
$h_{j}(u)x_{j,0}^{-}v^{+}=\frac{u-(a+d_j)}{u-a}x_{j,0}^{-}v^{+}$. Then
$$G_{t}=H_{j,t}x_{j,0}^{-}v^{+}-x_{j,0}^{-}H_{j,t}v^{+}=\frac{1}{t+1}\big((a-d_j)^{t+1}-(a+d_j)^{t+1}\big)x_{j,0}^{-}v^{+}.$$
Since $F_{k}=\sum\limits_{t=0}^{k}c_tG_{t}$, by comparing their coefficients of $x_{j,0}^{-}v^{+}$ both sides, we obtain:
\begin{equation}\label{eq41}
\frac{1}{k+1}\big((a+\frac{d_i}{2})^{k+1}-(a-\frac{d_i}{2})^{k+1}\big)
=\sum_{t=0}^{k}\frac{c_t}{t+1}\big((a-d_j)^{t+1}-(a+d_j)^{t+1}\big).
\end{equation}

Now, suppose $\pi_j(u)=\prod\limits_{s=1}^{m_j}(u-a_s)$ for $m_j\geq 2$. Thus $h_{j}(u)v^{+}=\prod\limits_{s=1}^{m_j}\frac{u-(a_s-d_j)}{u-a_s}v^{+}$ and
$h_{j}(u)\big(x_{j,0}^{-}\big)^{m_j}v^{+}=\prod\limits_{s=1}^{m_j}\frac{u-(a_s+d_j)}{u-a_s}\big(x_{j,0}^{-}\big)^{m_j}v^{+}$,
then $$H_{j}(u)v^{+}=\sum\limits_{s=1}^{m_j}\Big(\ln\big(1-(a_s-d_j)u^{-1}\big)-\ln\big(1-a_su^{-1}\big)\Big)v^{+}$$
and $$H_{j}(u)(x_{j,0}^{-})^{m_j}v^{+}=\sum\limits_{s=1}^{m_j}\Big(\ln\big(1-(a_s+d_j)u^{-1}\big)
-\ln\big(1-a_su^{-1}\big)\Big)(x_{j,0}^{-})^{m_j}v^{+}.$$ Therefore,
$$G_{t}=H_{j,k}(x_{j,0}^{-})^{m_j}v^{+}-(x_{j,0}^{-})^{m_j}H_{j,k}v^{+}=\sum\limits_{s=1}^{m_j}\frac{1}{t+1}\big((a_s-d_j)^{t+1}-(a_s+d_j)^{t+1}\big)(x_{j,0}^{-})^{m_j}v^{+}.$$
\begin{align*}
F_{k}&=\sum_{t=0}^{k}c_tG_{t}\\
       &=\sum_{t=0}^{k}\Big(\frac{c_t}{t+1}\sum_{s=1}^{m_j}\big((a_s-d_j)^{t+1}-(a_s+d_j)^{t+1}\big)\Big)(x_{j,0}^{-})^{m_j}v^{+}\\
       &=\sum_{s=1}^{m_j}\Big(\sum_{t=0}^{k}\frac{c_t}{t+1}\big((a_s-d_j)^{t+1}-(a_s+d_j)^{t+1}\big)\Big)(x_{j,0}^{-})^{m_j}v^{+}\\
       &=\sum_{s=1}^{m_j}\frac{1}{k+1}\big((a_s+\frac{d_i}{2})^{k+1}-(a_s-\frac{d_i}{2})^{k+1}\big)(x_{j,0}^{-})^{m_j}v^{+}. \qquad \text{\big(by (\ref{eq41})\big)}
\end{align*}
Note that the coefficient of $(x_{j,0}^{-})^{m_j}v^{+}$ in the above computation equals the coefficient of $u^{-k-1}$ in $\sum\limits_{s=1}^{m_j}\ln\Big(\frac{1-(a_s-\frac{d_i}{2})u^{-1}}{1-(a_s+\frac{d_i}{2})u^{-1}}\Big)$. Thus
$$\sum_{r=0}^{m_j}(x_{j,0}^{-})^{r}[H_{i}(u),x_{j,0}^{-}](x_{j,0}^{-})^{m_j-r-1}v^{+}
=\ln\Big(\prod\limits_{s=1}^{m_j}\frac{1-(a_s-\frac{d_i}{2})u^{-1}}{1-(a_s+\frac{d_i}{2})u^{-1}}\Big)
(x_{j,0}^{-})^{m_j}v^{+}.$$
Thus we have that
\begin{align*}
H_{i}(u)(x_{j,0}^{-})^{m_j}v^{+}&=
\sum_{r=0}^{m_j}(x_{j,0}^{-})^{r}[H_{i}(u),x_{j,0}^{-}](x_{j,0}^{-})^{m_j-r-1}v^{+}+(x_{j,0}^{-})^{m_j}H_{i}(u)v^{+}\\
&=\ln\Big(\frac{P_j(u-\frac{d_i}{2}+d_i)}{P_j(u-\frac{d_i}{2})}\Big)(x_{j,0}^{-})^{m_j}v^{+}+
\ln\Big(\frac{P_i(u+d_i)}{P_i(u)}\Big)(x_{j,0}^{-})^{m_j}v^{+}\\
&=\ln\Big(\frac{P_j(u-\frac{d_i}{2}+d_i)}{P_j(u-\frac{d_i}{2})}\cdot\frac{P_i(u+d_i)}{P_i(u)}\Big)(x_{j,0}^{-})^{m_j}v^{+}.
\end{align*}
Recall that $\big(T_{j}(P)\big)_i(u)=P_i(u)P_j(u-\frac{d_i}{2})$. So $$h_{i}(u)(x_{j,0}^{-})^{m_j}v^{+}=
\frac{P_j(u-\frac{d_i}{2}+d_i)}{P_j(u-\frac{d_i}{2})}\cdot\frac{P_i(u+d_i)}{P_i(u)}(x_{j,0}^{-})^{m_j}v^{+}=
\frac{\big(T_{j}(P)\big)_i(u+d_i)}{\big(T_{j}(P)\big)_i(u)}(x_{j,0}^{-})^{m_j}v^{+},$$
i.e., $\mu_{i,w}(u)=\frac{\big(T_{w}(P)\big)_i(u+d_i)}{\big(T_{w}(P)\big)_i(u)}$.

Similar discussions can be carried on when $a_{ij}=0$, $-2$ or $-3$. We omit the details.

Suppose $l(w)=n\geq 2$ and $w=s_kw'$ with $l(w')=n-1$. Note that $T_{w}=T_{k}T_{w'}$, $T_{w}(\pi)=T_{k}\big(T_{w'}(\pi)\big)$ and $v_{w(\lap)}=(x_{k,0}^{-})^{m_k}v_{w'(\lap)}$ for some $m_k\in \Z_{\geq 0}$. This proof is almost the same as the above paragraphs by replacing $v^{+}$ by $v_{w'(\lap)}$. By induction, this completes the proof of this proposition.
\end{proof}
\begin{remark}
Suppose that $\big(T_{\sigma_j}(\pi)\big)_{r_j}(u)=\prod\limits_{r=1}^{n}\prod\limits_{s_r=0}^{m_r-1}\big(u-(a_r+s_r)\big)$.
Since one need to normalize the generators of $Y_{r_j}$ to satisfy the defining relations of $\ysl$, the associated polynomial of $Y_{r_j}\left(v_{\sigma_j(\lambda_{\pi^{(m)}})}\right)$ as a $\ysl$-module is $\prod\limits_{r=1}^{n}\prod\limits_{s_r=0}^{m_r-1}(u-\frac{a_r+s_r}{d_i})$, which is denoted by $\big(\widetilde{T_{w}(\pi)}\big)_{r_j}$.
\end{remark}

Before showing the main theorem of this paper, we need the following lemma first.
\begin{lemma}[Lemma 4.3, \cite{ChPr4}]\label{saytiirr}
Let $V$ be an finite-dimensional irreducible representation of $\yg$ with highest weight vector $v^{+}$, and let $\mathfrak{t}$ be a diagram subalgebra of $\g$. Then $Y(\mathfrak{t})$-subrepresentation of $V$ generated by $v^{+}$ is irreducible.
\end{lemma}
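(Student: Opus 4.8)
The plan is to prove directly that $W:=Y(\mathfrak{t})v^{+}$ admits no nonzero proper submodule. Let $J\subseteq I$ index the nodes of the diagram subalgebra $\mathfrak{t}$, so that $Y(\mathfrak{t})$ is generated by $x_{i,r}^{\pm},h_{i,r}$ with $i\in J$. First I would note that $v^{+}$ is automatically a highest weight vector for $Y(\mathfrak{t})$: the relations $x_{i,r}^{+}v^{+}=0$ hold for all $i\in I$, hence in particular for $i\in J$, and each $h_{i,r}$ acts by a scalar. Thus $W=Y(\mathfrak{t})^{-}v^{+}$ is a finite-dimensional highest weight $Y(\mathfrak{t})$-module, all of whose weights lie in $\lambda_{\pi}-\sum_{i\in J}\Z_{\geq 0}\alpha_{i}$.

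The key step, and the one I expect to carry the real content, is to identify $W$ with a sum of \emph{full} weight spaces of $V$. Since $V$ is irreducible we have $V=Y^{-}v^{+}$ (using $Y^{+}v^{+}=\C v^{+}$ and $Hv^{+}=\C v^{+}$ together with the triangular decomposition $\yg=Y^{-}HY^{+}$), so $V_{\mu}=(Y^{-})_{\mu-\lambda_{\pi}}v^{+}$ for every weight $\mu$. When $\mu\in\lambda_{\pi}-\sum_{i\in J}\Z_{\geq 0}\alpha_{i}$, I would invoke the PBW theorem for $\yg$: any element of weight $\mu-\lambda_{\pi}$ is a combination of ordered monomials in the negative root vectors $x_{\beta,r}^{-}$ whose roots $\beta$ sum to a nonnegative combination of $\{\alpha_{i}:i\in J\}$. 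Because the simple roots are linearly independent and each $\beta$ is positive, every such $\beta$ must be supported on $J$, i.e. $\beta\in\Delta_{J}^{+}$, so the monomial already lies in $Y(\mathfrak{t})^{-}$. Consequently $V_{\mu}=W_{\mu}$ for every weight $\mu$ of $W$, and $W=\bigoplus_{\mu\in\lambda_{\pi}-\sum_{i\in J}\Z_{\geq 0}\alpha_{i}}V_{\mu}$.

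To finish I would bring in the contravariant form. The Chevalley anti-involution $\varphi$ of $\yg$, determined by $\varphi(x_{i,r}^{\pm})=x_{i,r}^{\mp}$ and $\varphi(h_{i,r})=h_{i,r}$, preserves $Y(\mathfrak{t})$ and endows $V$ with a contravariant bilinear form; irreducibility of $V$ makes this form non-degenerate, and since $\varphi$ fixes $H$ it pairs $V_{\mu}$ with $V_{\mu}$ only. Restricting the form to $W$ yields a $Y(\mathfrak{t})$-contravariant form that is block-diagonal over the weights and, by the previous paragraph, coincides on each block $W_{\mu}=V_{\mu}$ with the non-degenerate form of $V$; hence its radical is zero. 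Since the radical of the contravariant form of a highest weight module is its unique maximal proper submodule, this forces $W$ to be irreducible.

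The main obstacle is precisely the identification $W_{\mu}=V_{\mu}$. The naive attempt---taking a $Y(\mathfrak{t})$-highest weight vector $u$ inside a hypothetical proper submodule and arguing it is annihilated by all $x_{k,r}^{+}$---breaks down because $Y(\mathfrak{t})$ is not stable under $\yg$, so $x_{k,r}^{+}u$ with $k\notin J$ may leave $W$ and be nonzero in $V$. The weight-support analysis via the PBW basis is what circumvents this: it pins down which lowering operators can reach a $J$-supported weight and confines them to $Y(\mathfrak{t})$, after which the contravariant form does the rest. Verifying that $\varphi$ respects the defining relations of $\yg$ and that the restricted form detects the maximal submodule are routine, and I would not belabor them.
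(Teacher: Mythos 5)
You should know at the outset that the paper contains no proof of this lemma to compare against: it is imported verbatim, citation and all, from Lemma 4.3 of \cite{ChPr4}. So the relevant comparison is with the short standard argument that citation refers to---and that argument is precisely the ``naive attempt'' you dismiss in your closing paragraph. Your reason for dismissing it is incorrect. Let $N\subseteq W:=Y(\mathfrak{t})v^{+}$ be a nonzero $Y(\mathfrak{t})$-submodule and pick a weight vector $u\in N$ whose weight $\mu$ is maximal among the weights of $N$ (the $h_{j,0}$, $j\in J$, act semisimply on $N$ and, because the Cartan matrix of $\mathfrak{t}$ is invertible, they separate the weights of $W$, so $N$ is a sum of weight spaces). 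Then $x_{i,r}^{+}u=0$ for $i\in J$ by maximality, since $x_{i,r}^{+}u\in N\cap V_{\mu+\alpha_i}$. For $k\notin J$, the vector $x_{k,r}^{+}u$ lies in $V_{\mu+\alpha_k}$, and this space is zero: writing $\mu=\lap-\sum_{i\in J}n_i\alpha_i$, the difference $\lap-(\mu+\alpha_k)$ has coefficient $-1$ on $\alpha_k$, so $\mu+\alpha_k$ is not a weight of $V$ at all, as every weight of $V$ lies below $\lap$. It is irrelevant that $x_{k,r}^{+}u$ ``leaves $W$''; it vanishes in $V$. Hence $u$ is killed by all raising operators of $\yg$, so $\yg u\subseteq\bigoplus_{\nu\leq\mu}V_{\nu}$; if $N$ were proper then $\mu\neq\lap$ and $\yg u$ would be a nonzero proper $\yg$-submodule of $V$, contradicting irreducibility. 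Note this argument needs only the trivial inclusion of the weights of $W$ in $\lap-\sum_{i\in J}\Z_{\geq 0}\alpha_i$, not your stronger identification of $W$ with a sum of full weight spaces.

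Your own route is essentially sound, and it proves along the way the stronger, independently useful fact $W=\bigoplus_{\mu\in\lap-\sum_{i\in J}\Z_{\geq 0}\alpha_i}V_{\mu}$; that identification can even be had more cheaply than by PBW, since $Y^{-}$ is generated by the $x_{i,r}^{-}$ and a word $x_{i_1,r_1}^{-}\cdots x_{i_k,r_k}^{-}v^{+}$ whose weight lies in $\lap-\sum_{i\in J}\Z_{\geq 0}\alpha_i$ must have all $i_j\in J$ by linear independence of the simple roots. Two points you wave off as routine do need saying: the $J$-supported root vectors lie in $Y(\mathfrak{t})$ only for the standard choice of root vectors as iterated brackets of the $x_{i,s}^{-}$ with $i\in J$; and the step ``radical equals unique maximal proper submodule,'' applied to $W$, again requires submodules of $W$ to be weight-graded, i.e.\ the invertibility of the Cartan matrix of $\mathfrak{t}$ noted above. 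With these filled in, your contravariant-form proof is a correct but considerably heavier alternative: its price is the auxiliary machinery (the anti-involution $\varphi$, existence and nondegeneracy of the contravariant form on irreducible modules), all of which the one-paragraph weight argument avoids.
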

From now on, we assume $s_{r_1}s_{r_2}\ldots s_{r_p}$ is a reduced expression of the longest element $w_0\in \W$.
\begin{theorem}\label{mtimtp}
Let $\pi^{(i)}$, $1\leq i\leq k$, be $l$-tuples of polynomials in $u$ and $V(\pi^{{(i)}})$ be the finite-dimensional irreducible representations of $\yg$ whose Drinfeld polynomials are $\pi^{(i)}$. The ordered tensor product $L=V(\pi^{(1)})\otimes V(\pi^{(2)})\otimes\ldots\otimes V(\pi^{(k)})$ is a highest weight representation if the polynomial $\widetilde{\big(T_{\sigma_j}(\pi^{(m)})\big)}_{r_j}$ is in general position with respect to $(\widetilde{\pi^{(n)}})_{r_j}$ for all $1\leq j\leq p$ and $1\leq m<n\leq k$.
\end{theorem}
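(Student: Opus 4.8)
The plan is to induct on the number of tensor factors $k$, at each stage peeling off the leftmost factor and reducing the generation problem to a rank-one ($\ysl$) computation governed by the braid group action of Proposition~\ref{ogpigf3p}. The case $k=1$ is trivial. For the inductive step, write $L = V(\pi^{(1)}) \otimes W$ with $W = V(\pi^{(2)}) \otimes \cdots \otimes V(\pi^{(k)})$. The general position hypothesis restricted to the pairs $2 \le m < n \le k$ is precisely the hypothesis of the theorem for the shorter product $W$ (same $\g$, same reduced word, same $T_{\sigma_j}$), so by induction $W$ is highest weight, generated by its highest weight vector $w^{+} = v_2^{+} \otimes \cdots \otimes v_k^{+}$. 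Since $V(\pi^{(1)})$ and $W$ are finite-dimensional highest weight modules, Lemma~\ref{v-w+gvtw} shows that $v_1^{-} \otimes w^{+}$ generates $L$, where $v_1^{-}$ is a lowest weight vector of $V(\pi^{(1)})$. Hence it suffices to prove that $v_1^{-} \otimes w^{+}$ lies in $\yg \cdot (v_1^{+} \otimes w^{+})$, and this is where the whole weight of the argument lies.

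To reach $v_1^{-}$ from $v_1^{+}$ I would use the fixed reduced expression $w_0 = s_{r_1}\cdots s_{r_p}$ and the chain of extremal vectors $v_{\sigma_j(\lambda_{\pi^{(1)}})}$ of $V(\pi^{(1)})$ from Section~4, interpolating between $v_{\sigma_p(\lambda_{\pi^{(1)}})} = v_1^{+}$ and $v_{\sigma_0(\lambda_{\pi^{(1)}})} = v_1^{-}$, with $v_{\sigma_{j-1}(\lambda_{\pi^{(1)}})} = (x_{r_j,0}^{-})^{m_j} v_{\sigma_j(\lambda_{\pi^{(1)}})}$ for the power $m_j$ determined by $\sigma_j(\lambda_{\pi^{(1)}})$. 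By downward induction on $j$ it is then enough to establish, for each node, that
$$v_{\sigma_{j-1}(\lambda_{\pi^{(1)}})} \otimes w^{+} \in Y_{r_j}\cdot\big(v_{\sigma_j(\lambda_{\pi^{(1)}})} \otimes w^{+}\big),$$
where $Y_{r_j} \cong \ysl$ is the rank-one subalgebra at $r_j$. Restricting to $Y_{r_j}$, the vector $v_{\sigma_j(\lambda_{\pi^{(1)}})}$ is a highest weight vector (Lemma~\ref{S5L1yiahwr}) generating an $\ysl$-module whose Drinfeld polynomial is $\widetilde{\big(T_{\sigma_j}(\pi^{(1)})\big)}_{r_j}$, by Proposition~\ref{ogpigf3p} together with the remark following it, while $w^{+}$ generates an $\ysl$-module whose polynomial is $\prod_{n\ge 2}(\widetilde{\pi^{(n)}})_{r_j}$, the eigenvalue of $h_{r_j}(u)$ on $w^{+}$ being the product over the factors by Proposition~\ref{Delta}(iii). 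The required membership follows once the $\ysl$-tensor product of these two modules is highest weight, which Lemma~\ref{beautiful} guarantees exactly under the general position of $\widetilde{\big(T_{\sigma_j}(\pi^{(1)})\big)}_{r_j}$ against each $(\widetilde{\pi^{(n)}})_{r_j}$, $n\ge 2$; that is, the case $m=1$ of the hypothesis. The two layers of induction together exhaust all pairs $1\le m<n\le k$.

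The main obstacle is the rank-one reduction itself, and it has two facets. First, one must be scrupulous about the \emph{direction} of general position: the coproduct formula of Proposition~\ref{Delta}(ii) for $x_{i,k}^{-}$ mixes the lowering operator on the first factor with $h_{i,j-1}$ on the second, and it is exactly this asymmetry that dictates whether Lemma~\ref{beautiful} is applied to $V(P)\otimes V(Q)$ or to $V(Q)\otimes V(P)$; the orientation of the hypothesis must be matched to the one coming from $\Delta_{\yg}$, and I would verify this against the explicit $\ysl$-action of Proposition~\ref{wra}. Second, the cross terms produced by $\Delta_{\yg}(x_{r_j,0}^{-})$ acting on $v_{\sigma_j(\lambda_{\pi^{(1)}})}\otimes w^{+}$ must be shown to remain inside the $Y_{r_j}$-submodule; the clean way to handle this is to carry out the entire step inside $Y_{r_j}(v_{\sigma_j(\lambda_{\pi^{(1)}})}) \otimes Y_{r_j}(w^{+})$, where $\Delta_{Y_{r_j}}$ controls everything and Lemma~\ref{beautiful} literally applies. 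A subsidiary point is that $Y_{r_j}(w^{+})$ need not be irreducible; one circumvents this either by invoking Lemma~\ref{saytiirr} to replace it by the irreducible $\ysl$-module with the same polynomial, or by passing to the relevant highest weight quotient, neither of which affects the membership being proved.
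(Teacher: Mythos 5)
Your proposal takes essentially the same route as the paper's proof: induction on $k$, reduction via Lemma \ref{v-w+gvtw} to showing $v_1^{-}\otimes w^{+}\in \yg\left(v_1^{+}\otimes w^{+}\right)$, the chain of extremal vectors $v_{\sigma_j(\lambda_{\pi^{(1)}})}$ along the fixed reduced word for $w_0$ with downward induction on $j$, and the rank-one reduction in which Proposition \ref{ogpigf3p} (with the remark following it), Lemma \ref{beautiful}, Proposition \ref{Delta} and Lemma \ref{saytiirr} are combined exactly as in the paper. The only cosmetic difference is that the paper keeps the right-hand factors separate as $Y_{r_j}\left(v_2^{+}\right)\otimes\cdots\otimes Y_{r_j}\left(v_k^{+}\right)$ (each irreducible by Lemma \ref{saytiirr}) and dominates the left factor by the local Weyl module $W(Q)$, which is precisely the factor-wise alternative you yourself indicate for handling the possible non-irreducibility of $Y_{r_j}\left(w^{+}\right)$.
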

\begin{proof}
Let $v_m^{+}$ be the highest weight vector of $V(\pi^{(m)})$ and let $v_1^{-}$ be the lowest weight vector of $V(\pi^{(1)})$. Recall that $Y_{r_j}\left(v_{\sigma_j(\lambda_{\pi^{(m)}})}\right)$ is a highest weight representation of $\ysl$. Thus $Q:=\big(\widetilde{T_{w}(\pi^{(m)}})\big)_{r_j}$ is a polynomial.

We prove this theorem by induction on $k$.
For $k=1$, $L=V(\pi^{(1)})$ is irreducible and hence it is a highest weight representation. We assume that the claim is true for all positive integers less than or equal to $k-1$ ($k\geq 2$). By the induction hypothesis, $V(\pi^{(2)})\otimes\ldots\otimes V(\pi^{(k)})$ is a highest weight representation of $\yg$ and its highest weight vector is $v^{+}=v_2^{+}\otimes \ldots\otimes v_k^{+}$. To show that $L$ is a highest weight representation, it suffices to show $v^{-}_{1}\otimes v^{+}\in \yg\left(v^{+}_1\otimes v^{+}\right)$ by Lemma \ref{v-w+gvtw}.

We first show that $$Y_{r_j}\left(v_{\sigma_j(\lambda_{\pi^{(1)}})}\right)\otimes Y_{r_j}\left(v_2^{+}\right)\otimes\ldots\otimes Y_{r_j}\left(v_k^{+}\right)= Y_{r_j}\left(v_{\sigma_j(\lambda_{\pi^{(1)}})}\otimes v_2^{+}\otimes\ldots \otimes v_k^{+}\right).$$
For $2\leq n\leq k$, as $\ysl$ module, $Y_{r_j}(v_n^{+})$ is isomorphic to $V\Big(\big(\widetilde{\pi^{(n)}}\big)_{r_j}\Big)$ by Lemma \ref{saytiirr}. Since the polynomial $Q$ is in general position with respect to $(\widetilde{\pi^{(n)}})_{r_j}$, $W(Q)\otimes Y_{r_j}\left(v_2^{+}\right)\otimes\ldots\otimes Y_{r_j}\left(v_k^{+}\right)$ is a highest weight representation of $\ysl$ by Lemma \ref{beautiful}.
Hence its quotient $Y_{r_j}\left(v_{\sigma_j(\lambda_{\pi^{(1)}})}\right)\otimes Y_{r_j}\left(v_2^{+}\right)\otimes\ldots\otimes Y_{r_j}\left(v_k^{+}\right)$ is a highest weight module of $Y_{r_j}$ with highest weight vector $v_{\sigma_j(\lambda_{\pi^{(1)}})}\otimes v_2^{+}\otimes\ldots \otimes v_k^{+}$. Thus $$Y_{r_j}\left(v_{\sigma_j(\lambda_{\pi^{(1)}})}\otimes v_2^{+}\otimes\ldots \otimes v_k^{+}\right)\supseteq Y_{r_j}\left(v_{\sigma_j(\lambda_{\pi^{(1)}})}\right)\otimes Y_{r_j}\left(v_2^{+}\right)\otimes\ldots\otimes Y_{r_j}\left(v_k^{+}\right).$$
Since $\overline{Y}$ annihilates $v_{\sigma_j(\lambda_{\pi^{(1)}})}\otimes v_2^{+}\otimes\ldots \otimes v_k^{+}$, by Proposition \ref{Delta} that
$$\Delta_{\yg}(x)\left(v_{\sigma_j(\lambda_{\pi^{(1)}})}\otimes v_2^{+}\otimes\ldots \otimes v_k^{+}\right)=\Delta_{\ysl}(x)\left(v_{\sigma_j(\lambda_{\pi^{(1)}})}\otimes v_2^{+}\otimes\ldots \otimes v_k^{+}\right),$$ where $x\in Y_{r_j}$. Then it is easy to see that
$$Y_{r_j}\left(v_{\sigma_j(\lambda_{\pi^{(1)}})}\otimes v_2^{+}\otimes\ldots \otimes v_k^{+}\right)\subseteq Y_{r_j}\left(v_{\sigma_j(\lambda_{\pi^{(1)}})}\right)\otimes Y_{r_j}\left(v_2^{+}\right)\otimes\ldots\otimes Y_{r_j}\left(v_k^{+}\right).$$ Therefore the claim is true.

Since $v_{\sigma_{j-1}(\lambda_{\pi^{(1)}})}\otimes v^{+}=(x_{r_j,0}^{-})^{m_j}v_{\sigma_j(\lambda_{\pi^{(1)}})}\otimes v^{+}\in Y_{r_j}\left(v_{\sigma_{j}(\lap)}\right)\otimes Y_{r_j}\left(v^{+}\right)$, we have $v_{\sigma_{j-1}(\lap)}\otimes v^{+}\in Y_{r_j}\left(v_{\sigma_j(\lambda_{\pi^{(1)}})}\otimes v^{+}\right)$.
By downward induction on the subscript $j$ of $v_{\sigma_{j}(\lap)}$,
$v^{-}_{1}\otimes v^{+}\in \yg\left(v^{+}_1\otimes v^{+}\right)$. By Lemma \ref{v-w+gvtw}, $L=\yg \left(v_1^{+}\otimes v^{+}\right)$ is a highest weight representation.
\end{proof}

\section{The cyclicity condition made explicit}

With a specific reduced expression of $w_0$, one can compute explicitly the polynomial $\big(T_{r_{j+1}}T_{r_{j+2}}\ldots T_{r_p}(\pi^{(m)})\big)_{r_j}$ via the braid group action defined in Section 3. Then a cyclicity condition for the tensor product $L$ can be obtained immediately by using Theorem \ref{mtimtp}. In this section, we provide a finite set of numbers at which a tensor product of Kirillov-Reshetikhin modules may fail to be cyclic. At the end of this section, we give the structure of local Weyl modules when $\g$ is either of type $E$ or of type $F$.

When the simple Lie algebra $\g$ is classical, let the nodes of the Dynkin diagram of $\g$ be numbered as in \cite{BeKaOhPa} and use the reduced expression of $w_0$ therein. When $\g$ is exceptional, the nodes of the Dynkin diagram of the exceptional simple Lie algebras $\g$ be numbered as following.

{\centering
$E_6$: $\DynkinEs$;\quad $E_7$: $\DynkinEsv$;\newline $E_8$: $\DynkinEe$;\qquad\qquad $F_4$: \DynkinF;\qquad $G_2$: \DynkinG.}\\

One reduced expression for the longest element $w_0$ is listed as follows.
\begin{enumerate}
  \item[(a)] $E_6$: $u_6=s_{1}s_{2}s_{3}s_{1}s_{4}s_{2}s_{3}s_{1}s_{4}s_{3}s_{5}s_{4}s_{2}s_{3}s_{1}s_{4}s_{3}s_{5}s_{4}s_{2}s_{6}s_{5}s_{4}s_{2}s_{3}s_{1}s_{4}s_{3}s_{5}s_{4}s_{2}s_{6}s_{5}s_{4}s_{3}s_{1}
$.
  \item[(b)] $E_7$: $u_7=u_{6}s_{7}s_{6}s_{5}s_{4}s_{2}s_{3}s_{1}s_{4}s_{3}s_{5}s_{4}s_{2}s_{6}s_{5}s_{4}s_{3}
s_{1}s_{7}s_{6}s_{5}s_{4}s_{2}s_{3}s_{4}s_{5}s_{6}s_{7}$.
  \item [(c)] $E_8$: $u_{7}s_{8}s_{7}s_{6}s_{5}s_{4}s_{2}s_{3}s_{1}s_{4}s_{3}s_{5}s_{4}s_{2}s_{6}s_{5}s_{4}s_{3}s_{1}s_{7}s_{6}s_{5}s_{4}s_{2}s_{3}s_{4}s_{5}
s_{6}s_{7}s_{8}s_{7}s_{6}s_{5}s_{4}s_{2} s_{3}s_{1}\\ \ \qquad\times s_{4}s_{3}s_{5}s_{4}s_{2} s_{6}s_{5}s_{4}s_{3}s_{1}s_{7}s_{6}s_{5}s_{4}s_{2}s_{3}
s_{4}s_{5}s_{6}s_{7}s_{8}$.
  \item [(d)] $F_4$: $s_1s_2s_1s_3s_2s_1s_3s_2s_3s_4s_3s_2s_1s_3s_2s_3s_4s_3s_2s_1s_3s_2s_3s_4$.
  \item [(e)] $G_2$: $s_2s_1s_2s_1s_2s_1$.
\end{enumerate}

Note that it is enough to consider the case when the number of tensor factors is 2. We first compute a finite set of numbers at which $V\Big(\pi_{a_1,1}^{(b_1)}\Big)\otimes V\Big(\pi_{a_2,1}^{(b_2)}\Big)=V_{a_1}(\omega_{b_1})\otimes V_{a_2}(\omega_{b_2})$ may fail to be cyclic. Set $\{a,b..c\}=\{a, b, b+1 \ldots, c-1, c\}$ and $\{a..\hat{b}..c\}=\{a..b-1\}\bigcup \{b+1..c\}$. We omit the detailed computations for the following lemma because the computations is rudimental. In Cases 5, 6 and 7, $S(b_1,b_2)=S(b_2,b_1)$, so we omit the redundant sets.

\begin{lemma}\label{mtoyspl}
Let $L=V_{a_1}(\omega_{b_1})\otimes V_{a_2}(\omega_{b_2})$ be an ordered tensor product of fundamental representations of $\yg$. If $a_2-a_1\notin S\left(b_1,b_2\right)$, then $L$ is a highest weight representation of $\yg$, where the set $S\left(b_1,b_2\right)$ is defined as follows:
\begin{enumerate}

  \item When $\g=\nyn$,
$S\left(b_1,b_2\right)=\left\{\frac{|b_2-b_1|}{2}+k|1\leq k\leq \text{min}\left\{b_1, l-b_2+1\right\}\right\}$.

\item When $\g=\nyso$,
  \begin{enumerate}
  \item $S\left(b_1,b_2\right)=\left\{|b_1-b_2|+2+2r, 2l-(b_1+b_2)+1+2r\right\}$, where $1\leq b_1, b_2\leq l-1$ and $0\leq r<\text{min}\{b_1, b_2\}$;
  \item  $S\left(l,b_2\right)=\left\{l-b_2+2+2r|0\leq r<b_2, 1\leq b_2\leq l-1\right\}$;
  \item $S\left(b_1,l\right)=\left\{l-b_1+1+r, l-b_1+r|0\leq r< b_1, 1\leq b_1\leq l-1\right\}$;
   \item $S\left(l,l\right)=\left\{1,3,\ldots,2l-1\right\}$.
\end{enumerate}

 \item When $\g=\nysp$, \begin{enumerate}
  \item $S\left(b_1,b_2\right)=\left\{\frac{|b_1-b_2|}{2}+1+r, l+2+r-\frac{b_1+b_2}{2}|0\leq r<\text{min}\{b_1, b_2\}\right\}$, where $1\leq b_1, b_2\leq l-1$;
  \item  $S\left(l,b_2\right)=\left\{\frac{l-b_2+1}{2}+1+r, \frac{l-b_2-1}{2}+1+r|0\leq r<b_2, 1\leq b_2\leq l-1\right\}$;
  \item $S\left(b_1,l\right)=\left\{\frac{l-b_1+1}{2}+2+r|0\leq r< b_1, 1\leq b_1\leq l-1\right\};$
   \item $S\left(l,l\right)=\left\{2,3,\ldots,l+1\right\}$.
\end{enumerate}

  \item When $\g=\nyo$,
  \begin{enumerate}
  \item $S\left(b_1,b_2\right)=\{\frac{|b_1-b_2|}{2}+1+r, l+r-\frac{b_1+b_2}{2}\}$, where $1\leq b_1, b_2\leq l-2$ and $0\leq r<\text{min}\{b_1, b_2\}$;
  \item  $S\left(l-1,b_2\right)=S\left(l,b_2\right)=S\left(b_2,l-1\right)=S\left(b_2,l\right)=\\ \left\{\frac{l-1-b_2}{2}+1+r|0\leq r<b_2, 1\leq b_2\leq l-2\right\}$;
  \item $S\left(l-1,l\right)=S\left(l,l-1\right)=\left\{2,4,\ldots,l-2+\overline{l}\right\}$,  where $\overline{l}=0$ if $l$ is even and let $\overline{l}=1$ if $l$ is odd;
   \item $S\left(l-1,l-1\right)=S\left(l,l\right)=\left\{1,3,\ldots,l-1-\overline{l}\right\}$.
\end{enumerate}
 \item When $\g=E_6$,

\[\begin{array}{lll}
S(1,1)=\{1,4\} &
S(1,2)=\{\frac{5}{2},\frac{9}{2}\} &
S(1,3)=\{\frac{3}{2},\frac{7}{2},\frac{9}{2}\}\\
S(1,4)=\{2,3,4,5\}&
S(1,5)=\{\frac{5}{2},\frac{7}{2},\frac{11}{2}\}&
S(1,6)=\{3,6\}\\
S(2,2)=\{1,3,4,6\}&
S(2,3)=\{2,3,4,5\}&
S(2,4)=\{\frac{3}{2},\frac{5}{2},\frac{7}{2},\frac{9}{2},\frac{11}{2}\}\\
S(2,5)=\{2,3,4,5\}&
S(2,6)=\{\frac{5}{2}, \frac{9}{2}\}&
S(3,3)=\{1, 2,3,4,5\}\\
S(3,4)=\{\frac{3}{2},\frac{5}{2},\frac{7}{2},\frac{9}{2},\frac{11}{2}\}&
S(3,5)=\{2,3,4,5,6\}&
S(3,6)=\{\frac{5}{2}, \frac{7}{2}, \frac{11}{2}\}\\
S(4,4)=\{1,2,3,4,5,6\}&
S(4,5)=\{\frac{3}{2},\frac{5}{2},\frac{7}{2},\frac{9}{2},\frac{11}{2}\}&
S(4,6)=\{2,3,4,5\}\\
S(5,5)=\{1,2,3,4,5\}&
S(5,6)=\{\frac{3}{2},\frac{7}{2},\frac{9}{2}\}&
S(6,6)=\{1,4\}.
\end{array}\]
\item When $\g=E_7$,
\[ \begin{array}{lll}
S(1,1)=\{1,4,6,9\},&
S(1,2)=\{\frac{5}{2},\frac{9}{2},\frac{11}{2}, \frac{15}{2}\},&
S(1,3)=\{\frac{3}{2},\frac{7}{2}..\frac{13}{2},\frac{17}{2}\},\\
S(1,4)=\{2..8\},&
S(1,5)=\{\frac{5}{2}.. \frac{15}{2}\},&
S(1,6)=\{3,4,6,7\},\\
S(1,7)=\{\frac{7}{2},\frac{13}{2}\},&
S(2,2)=\{1,3..7,9\},&
S(2,3)=\{2..8\},\\
S(2,4)=\{\frac{3}{2}..\frac{17}{2}\},&
S(2,5)=\{2..8\},&
S(2,6)=\{\frac{5}{2}..\frac{15}{2}\},\\
S(2,7)=\{3,5,7\},&
S(3,3)=\{1.. 9\},&
S(3,4)=\{\frac{3}{2}..\frac{17}{2}\},\\
S(3,5)=\{2..8\},&
S(3,6)=\{\frac{5}{2}.. \frac{15}{2}\},&
S(3,7)=\{3,4,6,7\},\\
S(4,4)=\{1..9\},&
S(4,5)=\{\frac{3}{2},.. \frac{17}{2}\},&
S(4,6)=\{2..8\},\\
S(4,7)=\{\frac{5}{2}..\frac{15}{2}\},&
S(5,5)=\{1..9\},&
S(5,6)=\{\frac{3}{2}..\frac{17}{2}\},\\
S(5,7)=\{2,4,5,6,8\},&
S(6,6)=\{1,2, 4,5,6,8,9\},&
S(6,7)=\{\frac{3}{2},\frac{9}{2},\frac{11}{2}, \frac{17}{2}\},\\
S(7,7)=\{1,5,9\}.
\end{array}\]

\item When $\g=E_8$,
\[\begin{array}{ll}
S(1,1)=\{1,4,6,7, 9,10,12,15\},&
S(1,2)=\{\frac{5}{2}..\hat{\frac{7}{2}}..\hat{\frac{25}{2}}.. \frac{27}{2}\},\\
S(1,3)=\{\frac{3}{2}..\hat{\frac{27}{2}}..\frac{29}{2}\},&
S(1,4)=\{2..14 \},\\
S(1,5)=\{\frac{5}{2}..\frac{27}{2}\},&
S(1,6)=\{3..13\},\\
S(1,7)=\{\frac{7}{2}..\hat{\frac{11}{2}}..\hat{\frac{21}{2}}..\frac{25}{2}\},&
S(1,8)=\{4,7,9,12\},\\
S(2,2)=\{1..\hat{2}..\hat{14}.. 15\},&
S(2,3)=\{2..14\},\\
S(2,4)=\{\frac{3}{2}..\frac{29}{2}\},&
S(2,5)=\{2..14\},\\
S(2,6)=\{\frac{5}{2}..\frac{27}{2}\},&
S(2,7)=\{3..13\},\\
S(2,8)=\{\frac{7}{2},\frac{11}{2}, \frac{15}{2},\frac{17}{2}, \frac{21}{2},\frac{25}{2}\},&
S(3,3)=\{1..15\},\\
S(3,4)=\{\frac{3}{2}.. \frac{29}{2}\},&
S(3,5)=\{2..14\},\\
S(3,6)=\{\frac{5}{2}..\frac{27}{2}\},&
S(3,7)=\{3..13\},\\
S(3,8)=\{\frac{7}{2}..\hat{\frac{11}{2}}..\hat{\frac{21}{2}}.. \frac{25}{2}\},&
S(4,4)=\{1..15\},\\
S(4,5)=\{\frac{3}{2}..\frac{29}{2}\},&
S(4,6)=\{2..14\},\\
S(4,7)=\{\frac{5}{2}..\frac{27}{2}\},&
S(4,8)=\{3..13\},\\
S(5,5)=\{1..15\},&
S(5,6)=\{\frac{3}{2}..\frac{29}{2}\},\\
S(5,7)=\{2..14\},&
S(5,8)=\{\frac{5}{2}..\hat{\frac{7}{2}}..\hat{\frac{25}{2}}..\frac{27}{2}\},\\
S(6,6)=\{1..15\},&
S(6,7)=\{\frac{3}{2},\hat{\frac{7}{2}}.. \hat{\frac{15}{2}}..\hat{\frac{25}{2}}..\frac{29}{2}\},\\
S(6,8)=\{2,5..\hat{8}..11, 14\},&
S(7,7)=\{1,2,5..\hat{8}..11,14, 15\},\\
S(7,8)=\{\frac{3}{2},\frac{11}{2}, \frac{13}{2}, \frac{19}{2},\frac{21}{2}, \frac{29}{2}\},&
S(8,8)=\{1,6,10,15\}.
\end{array}\]
\item When $\g=F_4$,
\[\begin{array}{lll}
S(1,1)=\{1,4,5,8\},&
S(1,2)=\{2..7\},&
S(1,3)=\{2..7\},\\
S(1,4)=\{\frac{5}{2},\frac{7}{2},\frac{11}{2},\frac{13}{2}\},&
S(2,1)=S(1,2),&
S(2,2)=\{1..8\},\\
S(2,3)=\{1..8\},&
S(2,4)=\{\frac{3}{2}..\frac{15}{2}\},&
S(3,1)=\{3,4,6,7\},\\
S(3,2)=\{2..\hat{4}..8\},&
S(3,3)=\{1..9\},&
S(3,4)=\{\frac{3}{2}..\hat{\frac{5}{2}}..\hat{\frac{15}{2}}..\frac{17}{2}\},\\
S(4,1)=\{\frac{7}{2},\frac{13}{2}\},&
S(4,2)=\{\frac{5}{2},\frac{9}{2},\frac{11}{2},\frac{15}{2}\},&
S(4,3)=S(3,4)\\
S(4,4)=\{1,4,6,9\}.&
\end{array}\]
\item When $\g=G_2$,
\[\begin{array}{lll}
S(1,1)=\{3,4,5,6\},&
S(1,2)=\{\frac{1}{2},\frac{3}{2},\frac{5}{2},\frac{7}{2},\frac{9}{2}\},\\

\quad S(2,1)=\{\frac{9}{2},\frac{13}{2}\},&
S(2,2)=\{1,3,4,6\}.
\end{array}\]
\end{enumerate}
\end{lemma}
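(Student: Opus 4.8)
The plan is to derive Lemma \ref{mtoyspl} from Theorem \ref{mtimtp} by an explicit, case-by-case evaluation of the braid group action of Section 3. As noted just before the statement, it suffices to treat two tensor factors, so I take $\pi^{(1)}=\pi_{b_1,a_1}$ and $\pi^{(2)}=\pi_{b_2,a_2}$, the $l$-tuples carrying the single linear factor $u-a_1$ in slot $b_1$ and $u-a_2$ in slot $b_2$, respectively, and fix the reduced word $w_0=s_{r_1}\cdots s_{r_p}$ recorded in (a)--(e) (or the one from \cite{BeKaOhPa} in the classical cases). Theorem \ref{mtimtp} then guarantees that $L$ is a highest weight module provided, for every $1\le j\le p$, the polynomial $\widetilde{\big(T_{\sigma_j}(\pi^{(1)})\big)}_{r_j}$ is in general position with respect to $(\widetilde{\pi^{(2)}})_{r_j}$. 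The key simplification is that $(\pi^{(2)})_{r_j}(u)=u-a_2$ when $r_j=b_2$ and equals $1$ otherwise; a constant polynomial has no roots, so the general-position requirement is automatic at every position $j$ with $r_j\neq b_2$. Consequently only the positions at which the letter $b_2$ occurs in the word need to be examined.

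At each such position $j$ I would compute the $b_2$-component of $T_{\sigma_j}(\pi^{(1)})=T_{r_{j+1}}\cdots T_{r_p}(\pi^{(1)})$ by applying the formulas of Proposition \ref{braidp21} one generator at a time, starting from the single factor $u-a_1$ in slot $b_1$. Since every factor ever produced has the form $u-(a_1+c)$ with $c\in\tfrac12\Z$ determined by the off-diagonal Cartan entries, and since the inversions $(T_j(P))_j=1/P_j(u-d_j)$ only move such factors between numerator and denominator, the relevant Drinfeld polynomial is a product of linear factors whose roots are $a_1$ shifted by an explicit finite list of (half-)integers. Proposition \ref{ogpigf3p} certifies that this is genuinely the eigenvalue datum of $h_{r_j}(u)$ on the extremal vector, and the remark following it supplies the normalization $s\mapsto s/d_{r_j}$ producing the roots of $\widetilde{\big(T_{\sigma_j}(\pi^{(1)})\big)}_{r_j}$. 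General position fails exactly when the single (normalized) root $a_2/d_{b_2}$ of $(\widetilde{\pi^{(2)}})_{b_2}$ exceeds one of these roots by $1$, that is, when $a_2-a_1$ equals $d_{b_2}$ plus one of the recorded shifts. Taking the union of these forbidden values over all positions $j$ with $r_j=b_2$ defines $S(b_1,b_2)$: whenever $a_2-a_1\notin S(b_1,b_2)$, every hypothesis of Theorem \ref{mtimtp} is met and $L$ is highest weight.

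What remains is the bookkeeping, carried out separately for each $\g$. In the classical cases the shifts fall into arithmetic progressions, and the cut-offs in the displayed formulas --- for instance $1\le k\le\min\{b_1,l-b_2+1\}$ for $\nyn$, or the separate treatment of the short/spin nodes ($l$ for $\nyso$ and $\nysp$, and $l-1,l$ for $\nyo$) --- record precisely when a neighbouring slot's factors first enter and last leave the $b_2$-slot as one moves along the word. The genuinely heavy part, and the main obstacle, is the exceptional types: the chosen reduced word for $E_8$ has length $120$, so one must propagate $u-a_1$ through all the intervening generators, flag each occurrence of the letter $b_2$, and read off the roots deposited in the $b_2$-slot at that moment. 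The steps are entirely mechanical but voluminous, which is why I would only summarize the outcomes; the shorthand $\{a,b..c\}$ and $\{a..\hat{b}..c\}$ introduced above is designed precisely to encode the resulting (sometimes gapped) strings of half-integers. As an internal check I would verify the symmetry $S(b_1,b_2)=S(b_2,b_1)$ for the simply-laced types $E_6,E_7,E_8$ and confirm its expected failure for the non-simply-laced $F_4$ and $G_2$, where the order of the tensor factors genuinely matters.
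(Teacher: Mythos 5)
Your proposal follows exactly the paper's route: reduce to two fundamental factors, fix the stated reduced expressions of $w_0$ (from \cite{BeKaOhPa} in the classical cases), apply Theorem \ref{mtimtp} together with the braid group action of Proposition \ref{braidp21} and the normalization in the remark after Proposition \ref{ogpigf3p}, and read off the forbidden differences $a_2-a_1=d_{b_2}+c$ at the positions where the letter $b_2$ occurs --- which is precisely how the paper derives $S(b_1,b_2)$, with the explicit case-by-case computations likewise omitted there as ``rudimental.'' Your added observations (only positions with $r_j=b_2$ matter since constant polynomials impose no condition, and the symmetry $S(b_1,b_2)=S(b_2,b_1)$ in the simply-laced cases) are correct and consistent with the paper's presentation.
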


Let $S=\{a_1,\ldots, a_n\}$. Define $S+r=\{a_1+r,\ldots, a_n+r\}$.
\begin{proposition}\label{tpotfihw}
An ordered tensor product $L=V\Big(\pi_{a_1,m_1}^{(b_1)}\Big)\otimes V\Big(\pi_{a_2,m_2}^{(b_2)}\Big)$ is a highest weight representation if $a_2-a_1\notin \bigcup\limits_{s=0}^{m_2-1}\Big(\bigcup\limits_{r=0}^{m_1-1} \big(S(b_1, b_2)-s+r\big)\Big)$, where $S(b_1, b_2)$ is defined as in Lemma \ref{mtoyspl}.
\end{proposition}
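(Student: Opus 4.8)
The plan is to reduce Proposition \ref{tpotfihw} to the fundamental-representation case already handled in Lemma \ref{mtoyspl}, via Theorem \ref{mtimtp}. First I would observe that a Kirillov-Reshetikhin module $V\big(\pi_{a,m}^{(b)}\big)$ has Drinfeld polynomial whose only nontrivial component is $\pi_b(u)=(u-a)(u-(a+1))\cdots(u-(a+m-1))$, so that its roots form the string $\{a,a+1,\ldots,a+m-1\}$. The governing object in Theorem \ref{mtimtp} is the polynomial $\widetilde{\big(T_{\sigma_j}(\pi^{(1)})\big)}_{r_j}$ together with $\big(\widetilde{\pi^{(2)}}\big)_{r_j}$, and the cyclicity condition is that these be in general position for every node $r_j$ appearing in the reduced expression of $w_0$. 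By Proposition \ref{braidp21} the braid operators $T_j$ act multiplicatively on $\cal P$ (the first paragraph of its proof), so for the product polynomial $\pi_{a,m}^{(b)}=\prod_{r=0}^{m-1}\pi_{a+r,1}^{(b)}$ one has $T_{\sigma_j}\big(\pi_{a,m}^{(b)}\big)_{r_j}=\prod_{r=0}^{m-1}T_{\sigma_j}\big(\pi_{a+r,1}^{(b)}\big)_{r_j}$; this is the decisive structural input.

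The key computational step is then to understand what ``in general position'' becomes after this factorization. For a single fundamental factor, Lemma \ref{mtoyspl} packages the obstruction: $V_{a_1}(\omega_{b_1})\otimes V_{a_2}(\omega_{b_2})$ is highest weight whenever $a_2-a_1\notin S(b_1,b_2)$. I would argue that the general-position condition between $\widetilde{\big(T_{\sigma_j}(\pi^{(1)})\big)}_{r_j}$ and $\big(\widetilde{\pi^{(2)}}\big)_{r_j}$ fails for the KR pair $V\big(\pi_{a_1,m_1}^{(b_1)}\big)\otimes V\big(\pi_{a_2,m_2}^{(b_2)}\big)$ precisely when some root of the shifted-and-multiplied polynomial coming from the first factor is adjacent (differs by $1$) to some root of the second factor's string. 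Shifting the first factor's defining parameter from $a_1$ to $a_1+r$ for $0\le r\le m_1-1$ translates each single-fundamental obstruction set $S(b_1,b_2)$ by $r$; correspondingly, replacing the second factor by the $m_2$-string centered at $a_2$ enlarges the forbidden set by the shifts $-s$ for $0\le s\le m_2-1$. Taking the union over all $r$ and $s$ then yields exactly $\bigcup_{s=0}^{m_2-1}\bigcup_{r=0}^{m_1-1}\big(S(b_1,b_2)-s+r\big)$, and avoiding this set guarantees general position at every node simultaneously.

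Concretely, I would proceed as follows. Writing $V\big(\pi_{a_1,m_1}^{(b_1)}\big)$ as a quotient of the appropriate tensor string (using Lemma \ref{beautiful} in the $\ysl$-localizations, as in the proof of Theorem \ref{mtimtp}), the node-by-node general-position test decomposes into $m_1 m_2$ tests, one for each pair of roots coming from the first and second strings. Each such pairwise test is exactly the fundamental-representation test governed by $S(b_1,b_2)$ but applied to the shifted difference $(a_2-s)-(a_1+r)=(a_2-a_1)-s-r$. Hence the pair is in general position at node $r_j$ for all relevant $r,s$ iff $(a_2-a_1)-s-r\notin S(b_1,b_2)$ for every $0\le r\le m_1-1$, $0\le s\le m_2-1$, i.e. iff $a_2-a_1\notin S(b_1,b_2)+s+r$ for all such $r,s$. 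Rearranging, this is the stated avoidance condition. Invoking Theorem \ref{mtimtp} then concludes that $L$ is highest weight. The main obstacle I anticipate is bookkeeping the direction and sign of the shifts: one must verify that multiplying the first Drinfeld polynomial by $m_1$ factors shifts the obstruction by $+r$ while stringing the second factor contributes $-s$, and that the braid action's multiplicativity genuinely lets the $m_1 m_2$ tests be checked independently rather than producing cross-terms. Once that shift-tracking is pinned down, the rest is a direct unwinding of Lemma \ref{mtoyspl} through Theorem \ref{mtimtp}.
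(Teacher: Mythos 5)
Your approach is exactly the paper's: factor $\pi_{a_1,m_1}^{(b_1)}=\prod_{r=0}^{m_1-1}\pi_{a_1+r,1}^{(b_1)}$ using the multiplicativity of the braid action from Proposition \ref{braidp21}, reduce the general-position requirement of Theorem \ref{mtimtp} to $m_1m_2$ pairwise fundamental-representation tests, and settle each pairwise test by Lemma \ref{mtoyspl}. However, your concrete computation in the last paragraph contains a sign error that, as written, derives a \emph{different} forbidden set from the one in the statement. The roots of the second Kirillov-Reshetikhin string $\pi_{a_2,m_2}^{(b_2)}$ are $a_2+s$ for $0\le s\le m_2-1$ (as you yourself state in your first paragraph), not $a_2-s$. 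The pairwise test is therefore $(a_2+s)-(a_1+r)\notin S(b_1,b_2)$, which is equivalent to $a_2-a_1\notin S(b_1,b_2)-s+r$, and the union over all $r,s$ is exactly the stated set $\bigcup_{s=0}^{m_2-1}\bigcup_{r=0}^{m_1-1}\bigl(S(b_1,b_2)-s+r\bigr)$. Your final paragraph instead writes the difference as $(a_2-s)-(a_1+r)$ and concludes that the condition is $a_2-a_1\notin S(b_1,b_2)+s+r$ for all $r,s$; the resulting union $\bigcup_{s,r}\bigl(S(b_1,b_2)+s+r\bigr)$ is genuinely different from the stated one whenever $m_2>1$ (for instance, with $m_1=1$, $m_2=2$ it is $S\cup(S+1)$ rather than $S\cup(S-1)$), so the closing claim that this ``rearranges'' to the stated avoidance condition is false. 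Note that your second paragraph has the bookkeeping right --- the first factor contributes shifts $+r$ and the second contributes $-s$ --- so the slip is localized: replacing $a_2-s$ by $a_2+s$ in the final computation repairs the proof and brings it into agreement both with your own second paragraph and with the paper's argument.
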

\begin{proof}
It follows from Proposition \ref{braidp21} that $T_{w}\big(\pi_{a_1,m_1}^{(b_1)}\big)=\prod\limits_{r=0}^{m_1-1} T_{w}\big(\pi_{a_1+r,1}^{(b_1)}\big)$. It is easy to see that $\widetilde{\Big(T_{w}\big(\pi_{a_1,m_1}^{(b_1)}\big)\Big)_{b_2}}$ is in general position with respect to $\widetilde{\Big(\pi_{a_2,m_2}^{(b_2)}\Big)_{b_2}}$ if and only if for each $0\leq r\leq m_1-1$ and $0\leq s\leq m_2-1$, $\widetilde{\Big(T_{w}\big(\pi_{a_1+r,1}^{(b_1)}\big)\Big)_{b_2}}$ is in general position with respect to $\widetilde{\Big(\pi_{a_2+s,1}^{(b_2)}\Big)_{b_2}}$. By the Lemma \ref{mtoyspl}, $(a_2+s)-(a_1+r)\notin S(b_1,b_2)$, which is equivalent to $a_2-a_1\notin \bigcup\limits_{s=0}^{m_2-1}\Big(\bigcup\limits_{r=0}^{m_1-1} \big(S(b_1, b_2)-s+r\big)\Big)$.
\end{proof}

Note that the dimension of the local Weyl module $W(\omega_i)$ for the current algebra $\g[t]$
and that of local Weyl module $W(\pi_{i,a})$ for $\yg$  are the same. It follows from Proposition \ref{tpotfihw} and Theorem 3.7 in \cite{TaGu} that
\begin{theorem}
Let $\g$ be a complex simple Lie algebra. Let $\pi=\big(\pi_1(u),\ldots, \pi_{l}(u)\big)$, where $\pi_i\left(u\right)=\prod\limits_{j=1}^{m_i}\left(u-a_{i,j}\right)$. Let $S$ be the multiset of the roots of these polynomials. Let $a_1=a_{i,j}$ be one of the numbers in $S$ with the maximal real part and let $b_1=i$. Inductively, let $a_r=a_{s,t}\left(r\geq 2\right)$ be one of the numbers in $S\setminus\{a_1, \ldots, a_{r-1}\}\ (r\geq 2)$ with the maximal real part and $b_r=s$. Let $k=m_1+\ldots+m_l$. Then the ordered tensor product $L=V_{a_1}(\omega_{b_1})\otimes V_{a_2}(\omega_{b_2})\otimes\ldots\otimes V_{a_k}(\omega_{b_k})$ is a highest weight representation of $\yg$, and its associated polynomial is $\pi$. Moreover, the local Weyl module $W(\pi)$ is isomorphic to $L$.
\end{theorem}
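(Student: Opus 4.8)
The plan is to derive the highest-weight claim from the cyclicity criterion already assembled (Theorem \ref{mtimtp} together with Lemma \ref{mtoyspl}, specialized in Proposition \ref{tpotfihw}), to read off the Drinfeld polynomial of $L$ from the coproduct, and finally to promote the canonical surjection from the local Weyl module to an isomorphism by a dimension count.

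First I would show that $L$ is a highest weight representation. Since each tensor factor is a fundamental module, Theorem \ref{mtimtp} reduces the problem to checking, for every pair $1\le m<n\le k$, the two-factor condition of Lemma \ref{mtoyspl}, namely $a_n-a_m\notin S(b_m,b_n)$. The crucial structural feature of Lemma \ref{mtoyspl} is that, in every Cartan type, each set $S(b_m,b_n)$ consists solely of positive real numbers (the smallest element occurring anywhere is $\tfrac12$). By construction the roots are enumerated so that $\mathrm{Re}(a_1)\ge\mathrm{Re}(a_2)\ge\cdots\ge\mathrm{Re}(a_k)$; hence for $m<n$ one has $\mathrm{Re}(a_n-a_m)\le 0$, so $a_n-a_m$ is not a positive real number and therefore lies outside $S(b_m,b_n)$. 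Thus all pairwise conditions hold and $L$ is a highest weight module.

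Next I would identify the associated polynomial. Writing $v_r^{+}$ for the highest weight vector of $V_{a_r}(\omega_{b_r})$, whose Drinfeld polynomial is $\pi_{b_r,a_r}$, I apply Proposition \ref{Delta}(iii), which shows that on $v_1^{+}\otimes\cdots\otimes v_k^{+}$ the series $h_i(u)$ acts by the product of the eigenvalues on the individual factors; for the $i$-th component this product is $\prod_{r:\,b_r=i}\frac{u-a_r+d_i}{u-a_r}=\frac{\pi_i(u+d_i)}{\pi_i(u)}$, using that the $a_r$ with $b_r=i$ are precisely the roots $a_{i,1},\dots,a_{i,m_i}$ of $\pi_i$. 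Hence $L$ is a highest weight module with associated polynomial $\pi$. For the Weyl-module statement, the universal property of $W(\pi)$ supplies a surjection $W(\pi)\twoheadrightarrow L$, so $\dim L\le\dim W(\pi)$. To close the gap I run the chain $\dim L\le\dim W(\pi)\le\dim W(\lap)$, the second inequality being Theorem 3.8 of \cite{TaGu} with $\lap=\sum_i m_i\omega_i$. On the other side, the dimension of a fundamental module is independent of its spectral parameter, so $\dim L=\prod_i\big(\dim V_a(\omega_i)\big)^{m_i}$; combining the recalled equality $\dim W(\omega_i)=\dim W(\pi_{i,a})$ with the identification $V_a(\omega_i)=W(\pi_{i,a})$ of the fundamental module with its local Weyl module, together with the multiplicativity $\dim W(\lap)=\prod_i\big(\dim W(\omega_i)\big)^{m_i}$ of current-algebra Weyl modules, gives $\dim W(\lap)=\dim L$. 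The chain then collapses to equalities, so $W(\pi)\to L$ is an isomorphism.

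The main obstacle is this final dimension bookkeeping: the argument hinges on the external inputs that the fundamental local Weyl module is irreducible (so $\dim V_a(\omega_i)=\dim W(\omega_i)$) and that current-algebra Weyl modules factor multiplicatively, and one must arrange the two one-sided bounds so that they pinch to an exact equality rather than leaving slack. By contrast the highest-weight half of the theorem is pleasantly rigid, collapsing to the single sign observation $\mathrm{Re}(a_n-a_m)\le 0$ once the positivity of the sets $S(b_m,b_n)$ is noticed.
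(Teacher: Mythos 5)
Your proposal is correct and takes essentially the same route as the paper, whose own proof is a two-line citation: the ordering by decreasing real parts makes Proposition \ref{tpotfihw} (equivalently Lemma \ref{mtoyspl}, fed into Theorem \ref{mtimtp}) apply precisely because every set $S(b_m,b_n)$ consists of positive real numbers, and the universal-property-plus-dimension-pinch argument you reconstruct is exactly what the paper outsources to Theorem 3.7 of \cite{TaGu} together with its remark that $\dim W(\omega_i)=\dim W(\pi_{i,a})$. The external inputs you flag (irreducibility of the fundamental local Weyl modules, i.e.\ $V_a(\omega_i)\cong W(\pi_{i,a})$, and multiplicativity of current-algebra Weyl module dimensions) are the same ones hidden in those citations, so your argument is no less complete than the paper's.
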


\end{document}